\documentclass[11pt,reqno]{amsart}
\usepackage{amsthm,amssymb,amsmath}

\usepackage{xcolor}

\usepackage{fix-cm}
\usepackage{enumitem,needspace}
\usepackage[normalem]{ulem}
\usepackage{esint}
\usepackage{mlmodern}
\usepackage[expansion=false]{microtype}

\usepackage[T1]{fontenc}

\usepackage[colorlinks,citecolor=blue,hypertexnames=false]{hyperref}

\AtBeginDocument{
  \hypersetup{
    urlcolor=blue,
    citecolor=red,
    linkcolor=blue,
  }
}

\newtheorem{theorem}{Theorem}
\newtheorem{proposition}{Proposition}
\newtheorem*{theorem*}{Theorem}
\newtheorem{lemma}{Lemma}[section]
\newtheorem{corollary}{Corollary}

\theoremstyle{definition}

\newtheorem*{definition*}{\bf Definition}

\newtheorem*{remark*}{\sc Remark}

\newtheorem*{example*}{\bf Example}

\newcommand{\Rd}{\mathbb R^d}
\newcommand{\Cc}{C_c^\infty(\Rd)}
\newcommand{\dd}{\,dx}

\DeclareMathOperator{\Rea}{Re}

\numberwithin{equation}{section}

\begin{document}

\title[The $L^1$-realization of an elliptic operator]{The $L^1$-realization of an elliptic operator with unbounded coefficients}
\author{Sallah Eddine Boutiah}

\keywords{One-parameter semigroups, elliptic operators with unbounded coefficients, Schr\"odinger operator, analytic semigroups}

\subjclass[2020]{Primary 47D07, 47D08 ; Secondary 35J10, 35K20}

\begin{abstract}
We establish an $L^1$-theory for a class of elliptic operators with unbounded coefficients, extending our earlier semigroup generation results and those of Canale--Rhandi--Tacelli to the endpoint $p=1$. The $L^1$-realization generates a positive analytic $C_0$-semigroup that is compact for every positive time. We characterize the generator domain through weighted estimates and prove consistency with the corresponding $L^p$-semigroups for $1<p<\infty$.
\end{abstract}

\address{Universit\'{e} Laval, D\'{e}partement de math\'{e}matiques et de statistique, Qu\'{e}bec, QC, Canada and Laboratoire de Math\'{e}matiques Appliqu\'{e}es, Universit\'{e} Ferhat Abbas, S\'{e}tif 1, Campus El Bez, S\'{e}tif, Algeria}
\email{sallah-eddine.boutiah.1@ulaval.ca}
\email{boutiah@univ-setif.dz}
\fontsize{10.5pt}{4.4mm}\selectfont
\maketitle


\section{Introduction and main result}
\label{sec:introduction}

Partial differential equations with singular or unbounded coefficients arise in mathematical physics, for example in the study of Schr\"odinger operators with confining potentials and in diffusion models with spatially varying diffusivity.

In this paper, we extend to $L^1(\mathbb R^d)$ the analytic semigroup theory for the elliptic operator
\begin{equation}\label{def-A}
\Lambda=(1+|x|^\alpha)\Delta
+b|x|^{\alpha-2}x\cdot\nabla-c|x|^\beta,
\end{equation}
where $d\geq3$, $\alpha>2$, $\beta>\alpha-2$, $b\in\mathbb R$ and $c>0$.
We prove that the closure of $(\Lambda,C_c^\infty(\mathbb R^d))$ in $L^1(\mathbb R^d)$ generates a positive analytic
$C_0$-semigroup and characterize its generator domain explicitly. The corresponding results for $1<p<\infty$ were established by Boutiah, Gregorio, Rhandi and Tacelli~\cite{boutiah et al}, following the work of Canale, Rhandi and Tacelli~\cite{CRT} on the Schr\"odinger-type operator corresponding to the case $b=0$ and $c=1$.

An $L^1$ theory is needed to obtain a more complete solution theory for this operator, since it allows us to treat integrable initial data that belong to no $L^p$ space with $p>1$. For example, a radial analogue of the function used by Cruz-Uribe and Fiorenza~\cite{CUF} is
\[
f(x)=\frac{1}{|x|^d(\log |x|)^2}\,
\chi_{(0,e^{-1})}(|x|),
\]
with $f$ defined to be zero elsewhere. Integration in polar coordinates shows that $f\in L^1(\mathbb R^d)$, whereas $f\notin L^p(\mathbb R^d)$ for every $p>1$. Our result therefore yields solutions for such initial data, with continuous dependence on the initial datum in the $L^1$ norm.

Two difficulties arise when extending these results to $p=1$:
\begin{itemize}
\item[-] The Calder\'on--Zygmund estimate used to control all second-order derivatives by the Laplacian fails in $L^1$.
We need different estimates to characterize the generator domain.

\item[-] The proof of analyticity in~\cite{boutiah et al} relies on a gradient term with coefficient $p-1$.
This term vanishes at $p=1$, so the same argument no longer provides the estimate needed to prove analyticity.
\end{itemize}

The condition $\beta>\alpha-2$ ensures that the potential $c|x|^\beta$ grows faster at infinity than the terms of order
$|x|^{\alpha-2}$ arising from the diffusion and the drift. A Kato inequality and weighted interpolation then give
separate bounds for the potential term, the weighted gradient and the weighted Laplacian in terms of the graph norm.
These estimates lead to an explicit characterization of the generator domain.

We also prove that the resulting semigroup is consistent with the $L^p$-semigroups for $1<p<\infty$ and compact for every $t>0$. We determine the smallest exponent $\omega$ for which $\|T_1(t)\|_{\mathcal L(L^1)}\leq e^{\omega t}$ holds for all $t\geq0$.

To state our results, define

\begin{align}\label{domain_realz}
 \mathcal{D}(\Lambda):=\bigl\{u\in W^{1,1}(\Rd):\;&
 \Delta u\in L^1_{\mathrm{loc}}(\Rd),\\
 &|x|^\beta u,\;
 (1+|x|^{\alpha-1})\nabla u,\;
 (1+|x|^\alpha)\Delta u
 \in L^1(\Rd)\bigr\},\nonumber
\end{align}

where derivatives are understood in the distributional sense, and set
\begin{equation}\label{omega1}
 \omega_1  :=\sup_{x\in\Rd}  \left\{ (\alpha-b)(d+\alpha-2)|x|^{\alpha-2} -c|x|^\beta  \right\}.
\end{equation}
The assumptions $\beta>\alpha-2$ and $c>0$ ensure that
$\omega_1$ is finite.

\medskip

\begin{theorem}\label{thm:main}
Let $d\geq 3$, $\alpha>2$, $\beta>\alpha-2$, $b\in\mathbb{R}$ and $c>0$. Then,  the following assertions hold.
\begin{enumerate}[label=(\roman*),leftmargin=*,itemsep=3pt,topsep=3pt]
\item The operator $(\Lambda,\Cc)$ is closable in $L^1(\Rd)$. Its closure $(\Lambda,\mathcal{D}(\Lambda))$ generates a positive strongly continuous semigroup $(T_1(t))_{t\geq0}$ satisfying
\begin{equation}\label{semigroup-bound}
 \|T_1(t)\|_{\mathcal L(L^1)}
 \leq e^{\omega_1t},
 \qquad t\geq0.
\end{equation}
The resolvent of this realization is positive for every real $\lambda>\omega_1$, and
\begin{equation}\label{resolvent-bound}
 (\omega_1,\infty)\subset\rho(\Lambda),\qquad
 \|R(\lambda,\Lambda)\|_{\mathcal L(L^1)}
 \leq(\lambda-\omega_1)^{-1}.
\end{equation}
Moreover, $\Cc$ is a core for $(\Lambda,\mathcal{D}(\Lambda))$.

\item Estimate \eqref{graph-estimate} holds on $\mathcal{D}(\Lambda)$. Consequently, the norm
\begin{equation}\label{domain-norm}
\begin{aligned}
 \|u\|_{\mathcal{D}(\Lambda)}
 &:=\|u\|_1+\||x|^\beta u\|_1\\
 &\quad+\|(1+|x|^{\alpha-1})\nabla u\|_1
       +\|(1+|x|^\alpha)\Delta u\|_1
\end{aligned}
\end{equation}
is equivalent to the graph norm
$\|u\|_1+\|\Lambda u\|_1$.

\item For every $1<p<\infty$,
\begin{equation}\label{consistency}
 T_1(t)f=T_p(t)f,
 \qquad f\in L^1(\Rd)\cap L^p(\Rd),\quad t\geq0,
\end{equation}
where $T_p$ is the semigroup generated by $\Lambda_p$
in \cite{boutiah et al}.
\end{enumerate}
\end{theorem}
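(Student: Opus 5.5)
The overall strategy is Lumer--Phillips in $L^1$ combined with a priori weighted estimates. First I compute the formal adjoint of $\Lambda$. Writing $\Lambda u=(1+|x|^\alpha)\Delta u+b|x|^{\alpha-2}x\cdot\nabla u-c|x|^\beta u$, integration by parts gives
\[
\Lambda^*v=\operatorname{div}\bigl((1+|x|^\alpha)\nabla v\bigr)+(\alpha-b)|x|^{\alpha-2}x\cdot\nabla v-(\alpha-b)(d+\alpha-2)|x|^{\alpha-2}v-c|x|^\beta v .
\]
For $u\in\Cc$ I test $\Lambda u$ against $\operatorname{sign}_\varepsilon(u)=u/\sqrt{u^2+\varepsilon^2}$, which is the Kato inequality in integrated form. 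The second-order part contributes a nonpositive term $-\int(1+|x|^\alpha)\varepsilon^2|\nabla u|^2(u^2+\varepsilon^2)^{-3/2}$. The drift, after integration by parts, contributes $-b\int\operatorname{div}(|x|^{\alpha-2}x)\,(\sqrt{u^2+\varepsilon^2}-\varepsilon)$ plus the diffusion coefficient term. Letting $\varepsilon\to0$ yields
\[
\int \Lambda u\,\operatorname{sign}u\le\int\bigl((\alpha-b)(d+\alpha-2)|x|^{\alpha-2}-c|x|^\beta\bigr)|u|\le\omega_1\|u\|_1 .
\]
Hence $\Lambda-\omega_1$ is dissipative on $\Cc$, so it is closable with dissipative closure. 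For the range condition I would show that $(\lambda-\Lambda)\Cc$ is dense in $L^1$ for $\lambda>\omega_1$. Take $v\in L^\infty$ annihilating the range. Then $\lambda v-\Lambda^*v=0$ in distributions, and local elliptic regularity makes $v\in W^{2,q}_{\rm loc}$. A maximum-principle argument for $\Lambda^*$ then forces $v=0$: the zero-order coefficient of $\lambda-\Lambda^*$ is $\lambda-(\alpha-b)(d+\alpha-2)|x|^{\alpha-2}+c|x|^\beta>0$. I would run this argument with a Lyapunov-type test function $1+|x|^2$ multiplied by a small parameter, to handle the behavior at infinity of a bounded $v$. Alternatively, positivity of the resolvent, the bound \eqref{resolvent-bound}, and the semigroup bound \eqref{semigroup-bound} follow from this construction once the range is dense, and the core statement is built in by definition.

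For (ii), the key step, I would prove a priori on $\Cc$ that
\[
\||x|^\beta u\|_1+\|(1+|x|^{\alpha-1})\nabla u\|_1+\|(1+|x|^\alpha)\Delta u\|_1\le C(\|u\|_1+\|\Lambda u\|_1).
\]
The potential term comes from the same Kato computation. Keeping $c\int|x|^\beta|u|$ on the other side and using $\beta>\alpha-2$, the term $|x|^{\alpha-2}|u|$ can be absorbed by $\tfrac c2|x|^\beta|u|+C|u|$. The resulting bound is $\||x|^\beta u\|_1\le C(\|u\|_1+\|\Lambda u\|_1)$.

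For the gradient there is no Calder\'on--Zygmund estimate available. Instead I would use a weighted interpolation inequality of the form
\[
\|(1+|x|^{\alpha-1})\nabla u\|_1\le \delta\|(1+|x|^\alpha)\Delta u\|_1+C_\delta\|(1+|x|^{\alpha-2})u\|_1 .
\]
I would prove it on dyadic annuli $\{|x|\sim R\}$ by rescaling the one-dimensional/Landau-type $L^1$ interpolation $\|\nabla w\|_{L^1(B)}\le \delta\|\Delta w\|_{L^1(B')}+C_\delta\|w\|_{L^1(B')}$. That interpolation is valid in $L^1$ through the Green representation, since $\nabla\Gamma\in L^1_{\rm loc}$. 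The weight $|x|^{\alpha-2}$ on the right is dominated by $|x|^\beta$ plus constants. Then $(1+|x|^\alpha)\Delta u=\Lambda u+c|x|^\beta u-b|x|^{\alpha-2}x\cdot\nabla u$, and with the interpolation with small $\delta$ the drift term is absorbed. This closes the estimate, which I take to be \eqref{graph-estimate}.

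The closure of $\Cc$ in this norm then coincides with $\mathcal D(\Lambda)$. To see this I would use a cutoff $\eta(x/n)$ and mollification, checking that the commutators are controlled by the weighted $W^{1,1}$ terms in \eqref{domain_realz}. I expect the main obstacle to be the absorption and interpolation step: it must work uniformly in $x$ for every $b\in\mathbb R$, which is exactly where the lack of $L^1$ Calder\'on--Zygmund bounds bites.

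For (iii), for $\lambda$ large and $f\in\Cc$, let $u=R(\lambda,\Lambda_p)f$. Using the $L^p$ domain description from \cite{boutiah et al} together with weighted $L^1$ bounds via the Kato inequality, I would show $u\in\mathcal D(\Lambda)$. This gives equality of the resolvents on a dense set, extended by continuity. The Post--Widder/Euler formula $T(t)=\lim(\tfrac nt R(\tfrac nt))^n$ then yields \eqref{consistency}.
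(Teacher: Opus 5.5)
The main gap is in (iii). Your consistency proof rests on showing $u=R(\lambda,\Lambda_p)f\in\mathcal D(\Lambda)$ for $f\in\Cc$, and the tools you name do not give this.

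\begin{itemize}
\item The $L^p$ domain description gives only weighted $L^p$ bounds.
\item If you choose $p$ close to $1$ (the resolvents are independent of $p$) and test with cutoffs, Kato's inequality does give $u,|x|^\beta u\in L^1$. It gives no $L^1$ control of $\nabla u$, however.
\item The interpolation inequality from (ii) cannot be applied to $u$ itself, because $(1+|x|^\alpha)\Delta u\in L^1$ is not known a priori.
\item Applied to $\varphi_R u$, it produces the commutator $R^{\alpha-1}\int_{R<|x|<2R}|\nabla u|\dd$. The $L^p$ information bounds this only by a quantity of order $R^{d/p'}$, which does not vanish.
\end{itemize}

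What is missing is genuine pointwise decay of $u$ and $\nabla u$. The paper obtains it in Lemma~\ref{range} in three steps. Schauder theory makes $u$ a classical solution. A barrier $(1+|x|^2)^{-k}$ with $2k>d+\beta+\alpha-1$, together with the maximum principle, gives $|u|\le C(1+|x|^2)^{-k}$. Interior $W^{2,p}$ estimates on unit balls with $p>d$, applied to the equation divided by $1+|x|^\alpha$ (whose drift coefficient is then bounded), give $|\nabla u(x)|\le C(1+|x|)^{\beta-2k}$. These decay rates make all the weighted $L^1$ norms in $\mathcal D(\Lambda)$ finite.

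There are two smaller issues in (i).

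\begin{itemize}
\item Positivity does not follow from Lumer--Phillips: a real operator that is dissipative on $L^1$ need not have a positive resolvent. You need a separate argument. One option is the one-sided Kato inequality $\int\Lambda u\,\chi_{\{u>0\}}\dd\le\omega_1\|u^+\|_1$ for real $u\in\Cc$. It gives $(\lambda-\omega_1)\|u^+\|_1\le\|((\lambda-\Lambda)u)^+\|_1$, which passes to the closure because $u\mapsto u^+$ is continuous on $L^1$. The paper instead inherits positivity from the $L^p$ resolvents through Lemma~\ref{range} and the exponential formula.
\item The zero-order term of $\Lambda^*$ is $+(\alpha-b)(d+\alpha-2)|x|^{\alpha-2}$, not minus. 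The coefficient you then use for $\lambda-\Lambda^*$ is nevertheless correct.
\end{itemize}

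With these repairs, your duality argument for dense range is a valid alternative to the paper's, and it needs no $L^p$ input. Bounded solutions of $(\lambda-\Lambda^*)v=0$ are $C^2$ by local regularity. Since $\Lambda^*(1+|x|^2)\sim -c|x|^{\beta+2}$ at infinity, letting $\epsilon\to0$ in $v-\epsilon(1+|x|^2)$ forces $v\le0$ for every $\lambda>\omega_1$.

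Part (ii) matches Proposition~\ref{closure}. Localizing the interpolation on enlarged annuli, instead of absorbing the commutator term, is a harmless variant.

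Since (iii) needs a Lemma~\ref{range}-type result anyway, the paper's route is more economical: that single lemma yields the range condition, positivity and consistency at once.
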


The following corollary gives the optimal exponential bound and establishes compactness of the resolvent.

\begin{corollary}
\label{cor:optimal-bound-compact-resolvent}
Under the assumptions of Theorem~\ref{thm:main},
the following assertions hold.
\begin{enumerate}
\item[(i)]
Set 
$$\gamma:=(\alpha-b)(d+\alpha-2).$$
 Then
\begin{equation}\label{omega1-explicit}
 \omega_1=
 \begin{cases}
  0, & b\geq\alpha,\\[2mm]
  \displaystyle
  \frac{\beta-\alpha+2}{\beta}\,\gamma
  \left(\frac{(\alpha-2)\gamma}{c\beta}\right)^{
  \frac{\alpha-2}{\beta-\alpha+2}},
  & b<\alpha.
 \end{cases}
\end{equation}
Moreover, $\omega_1$ is the smallest real number $\omega$ such that
\begin{equation}\label{optimal-exponential-bound}
 \|T_1(t)\|_{\mathcal L(L^1)}\leq e^{\omega t},
 \qquad t\geq0.
\end{equation}
In particular, $(T_1(t))_{t\geq0}$ is contractive on
$L^1(\mathbb R^d)$ if and only if $b\geq\alpha$.

\item[(ii)]
The domain $\mathcal D(\Lambda)$, endowed with the graph norm, is compactly embedded in $L^1(\mathbb R^d)$.
Consequently, $R(\lambda,\Lambda)$ is compact on $L^1(\mathbb R^d)$ for every $\lambda\in\rho(\Lambda)$.
\end{enumerate}
\end{corollary}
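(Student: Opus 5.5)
The plan has three parts: compute the supremum in \eqref{omega1} by one-variable calculus, prove optimality by differentiating $\int T_1(t)f\,dx$ at $t=0$, and get compactness from a Rellich-plus-tail argument using Theorem~\ref{thm:main}(ii).

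\textbf{The formula \eqref{omega1-explicit}.} Write $V(x):=\gamma|x|^{\alpha-2}-c|x|^\beta$, so $\omega_1=\sup_{r\ge0}g(r)$ with $g(r)=\gamma r^{\alpha-2}-cr^\beta$.
\begin{itemize}
\item If $b\ge\alpha$, then $\gamma\le0$. Hence $g\le0$ and $g(0)=0$, which gives $\omega_1=0$.
\item If $b<\alpha$, then $\gamma>0$. Since $\beta>\alpha-2>0$, $g$ is positive near $0$ and tends to $-\infty$, so the supremum is an interior maximum.
\item Setting $g'(r)=0$ gives the unique critical point $r_0^{\beta-\alpha+2}=(\alpha-2)\gamma/(c\beta)$.
\item At this point $cr_0^\beta=\frac{\alpha-2}{\beta}\gamma r_0^{\alpha-2}$. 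Therefore $g(r_0)=\frac{\beta-\alpha+2}{\beta}\gamma r_0^{\alpha-2}$, which is exactly \eqref{omega1-explicit}.
\end{itemize}

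\textbf{Optimality of $\omega_1$.} The key identity is that the formal adjoint of $\Lambda$ maps the constant $1$ to $V$. Indeed,
\[
\Delta(1+|x|^\alpha)=\alpha(d+\alpha-2)|x|^{\alpha-2},\qquad \operatorname{div}(|x|^{\alpha-2}x)=(d+\alpha-2)|x|^{\alpha-2}.
\]
Integrating by parts therefore gives $\int\Lambda f\,dx=\int Vf\,dx$ for every $f\in\Cc$; no boundary terms appear because $f$ has compact support.

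Now suppose $\|T_1(t)\|\le e^{\omega t}$ for all $t\ge0$. Fix $0\le f\in\Cc$ with $\|f\|_1=1$ and set $\varphi(t):=\int T_1(t)f\,dx$.
\begin{itemize}
\item Positivity of $T_1$ gives $\varphi(t)=\|T_1(t)f\|_1\le e^{\omega t}$, while $\varphi(0)=1$.
\item Since $f\in\mathcal D(\Lambda)$, $\varphi$ is differentiable with $\varphi'(0)=\int\Lambda f\,dx=\int Vf\,dx$.
\item Comparing the two sides at $t=0$ yields $\int Vf\,dx\le\omega$.
\end{itemize}
Letting $f$ approximate a Dirac mass at a point $x_0$ gives $\omega\ge V(x_0)$ by continuity of $V$. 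Taking $|x_0|=r_0$ when $b<\alpha$, and $x_0\to0$ when $b\ge\alpha$, yields $\omega\ge\omega_1$. Together with \eqref{semigroup-bound}, this shows that $\omega_1$ is the smallest admissible exponent. Contractivity means $\omega=0$ is admissible, i.e.\ $\omega_1\le0$. By the formula, $\omega_1>0$ whenever $\gamma>0$, so contractivity holds if and only if $b\ge\alpha$.

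\textbf{Compactness (ii).} By Theorem~\ref{thm:main}(ii), a set that is bounded in graph norm is bounded in the norm \eqref{domain-norm}. Such a set is therefore bounded in $W^{1,1}(\Rd)$, and it has uniformly small tails:
\[
\int_{|x|>R}|u|\,dx\le R^{-\beta}\||x|^\beta u\|_1.
\]
By Rellich--Kondrachov on each ball $B_R$, combined with these tail bounds (or directly by the Kolmogorov--Riesz criterion), the set is relatively compact in $L^1(\Rd)$. For $\lambda\in\rho(\Lambda)$, $R(\lambda,\Lambda)$ maps $L^1$ boundedly into $\mathcal D(\Lambda)$ with the graph norm, so it is compact.

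\textbf{Main obstacle.} The only delicate point is justifying $\varphi'(0)=\int Vf\,dx$. This is straightforward once $f$ is chosen in the core $\Cc$: there $\Lambda f$ is given classically and the integration by parts is exact. So I expect the argument to go through without approximation issues.
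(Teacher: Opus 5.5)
Your proposal is correct and follows essentially the same route as the paper. Like the paper, you compute $\omega_1$ by one-variable calculus, prove optimality by differentiating $\|T_1(t)f\|_1=\int T_1(t)f\,dx$ at $t=0$ for nonnegative test functions concentrated near a point (using $\int\Lambda f\,dx=\int Vf\,dx$), and obtain compactness from Rellich--Kondrachov on balls together with the $|x|^\beta$ tail bound supplied by the graph estimate; the only difference is that the paper writes out the diagonal extraction explicitly, where you cite it or Kolmogorov--Riesz, which is equally valid.
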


We now establish analyticity of the semigroup constructed in Theorem~\ref{thm:main}. Together with the compactness
of the resolvent, this implies that $T_1(t)$ is compact for every $t>0$.

\begin{theorem}
\label{thm:L1-analyticity}
Under the assumptions of Theorem~\ref{thm:main}, the semigroup $(T_1(t))_{t\geq0}$ is analytic on $L^1(\mathbb R^d)$,
and $T_1(t)$ is compact for every $t>0$.

More precisely, there exist $\theta\in(\pi/2,\pi)$, $\omega>\omega_1$ and $M>0$ such that
\[
\omega+\Sigma_\theta\subset\rho(\Lambda),
\qquad
\|R(\lambda,\Lambda)\|_{\mathcal L(L^1)}
\leq\frac{M}{|\lambda-\omega|},
\quad \lambda\in\omega+\Sigma_\theta,
\]
where
$\Sigma_\theta =\{z\in\mathbb C\setminus\{0\}:|\arg z|<\theta\}.$
\end{theorem}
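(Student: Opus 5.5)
We would prove analyticity through a sectorial resolvent estimate. For $L^1$ the standard route is to show that the adjoint, or equivalently the $L^\infty$ estimate, controls $|\lambda|\|u\|_1$. We take a more direct perturbative route instead. The idea is to bound $|\operatorname{Im}\lambda|\,\|u\|_1$ by $\|\lambda u-\Lambda u\|_1$ plus a lower-order term, for $u\in\Cc$ and $\lambda=\mu+i\eta$ with $\mu$ large; since $\Cc$ is a core by Theorem~\ref{thm:main}(i), this suffices. Combined with the Hille--Yosida bound \eqref{resolvent-bound} on the real axis, a standard argument extends the resolvent to a sector $\omega+\Sigma_\theta$ with $\theta>\pi/2$. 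Concretely, it suffices to show that for some $\omega>\omega_1$ and $C>0$,
\[
|\lambda-\omega|\,\|u\|_1\le C\,\|(\lambda-\Lambda)u\|_1,\qquad \operatorname{Re}\lambda>\omega,\ u\in\mathcal D(\Lambda).
\]
Once this holds on the half-plane, the power series expansion $R(\lambda)=\sum_n(\lambda_0-\lambda)^nR(\lambda_0)^{n+1}$ opens it to a sector.

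To get the estimate, set $f=\lambda u-\Lambda u$ and test against $\overline{u}/|u|_\varepsilon$, where $|u|_\varepsilon=(|u|^2+\varepsilon^2)^{1/2}$, and integrate. Write $q=1+|x|^\alpha$. After one integration by parts, the real part reproduces the Kato-type computation behind \eqref{semigroup-bound}, giving $(\operatorname{Re}\lambda-\omega_1)\|u\|_1\lesssim\|f\|_1$. The imaginary part gives
\[
\eta\int |u|^2/|u|_\varepsilon\,dx=\operatorname{Im}\int f\,\overline u/|u|_\varepsilon\,dx-\operatorname{Im}\int q\,\nabla u\cdot\nabla(\overline u/|u|_\varepsilon)\,dx+(\text{drift terms}).
\]
Here lies the obstacle noted in the introduction: the $p-1$ coercive gradient term that controls the imaginary gradient part for $p>1$ degenerates at $p=1$. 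Our plan is to avoid this identity and instead use the weighted domain estimate of Theorem~\ref{thm:main}(ii), i.e.\ \eqref{graph-estimate}. We bound $|\eta|\,\|u\|_1\le\|f\|_1+\|\Lambda u\|_1$, so we need $\|\Lambda u\|_1\lesssim\|f\|_1+(\text{something small})\cdot|\lambda|\,\|u\|_1$. This in turn needs a $\lambda$-uniform a priori estimate
\[
\|q\Delta u\|_1+\||x|^\beta u\|_1+\|(1+|x|^{\alpha-1})\nabla u\|_1\le C\|(\lambda-\Lambda)u\|_1,
\]
uniformly for $\lambda$ in a half-plane.

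We would derive this from a scaling/localization argument. On a ball $B(x_0,r)$ with $r\sim|x_0|$ (for $|x_0|$ large), the coefficient $q\approx|x_0|^\alpha$ is nearly constant. After dividing by $q(x_0)$, the localized problem reads $(\lambda/q(x_0))v-\Delta v=\dots$. For the Laplacian on $L^1$, the classical heat semigroup is analytic, so $\|(z-\Delta)^{-1}\|\le M/|z|$ on any sector of angle less than $\pi$. Freezing coefficients and gluing with a partition of unity, the commutator terms involve $\nabla u$ weighted by $|x|^{\alpha-1}$ and $u$ weighted by $|x|^{\alpha-2}$. The condition $\beta>\alpha-2$ lets the potential $c|x|^\beta$ absorb the latter, and the gradient bound from \eqref{graph-estimate} with interpolation $\|\,|x|^{\alpha-1}\nabla u\|_1\le\epsilon\|q\Delta u\|_1+C_\epsilon\||x|^{\alpha-2}u\|_1$ absorbs the former; the bounded region is handled by the standard local estimate. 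We expect the main difficulty to be keeping every absorbed constant independent of $\arg\lambda$. Should the freezing argument prove too delicate, the fallback is duality: show the $L^\infty$ (or $C_0$) realization of the formal adjoint generates an analytic semigroup via Stewart-type estimates, then transfer.

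Compactness of $T_1(t)$ for $t>0$ then follows from analyticity together with compactness of the resolvent (Corollary~\ref{cor:optimal-bound-compact-resolvent}(ii)). Indeed, analyticity makes $t\mapsto T_1(t)$ norm continuous for $t>0$, and a norm-continuous semigroup with compact resolvent is immediately compact.
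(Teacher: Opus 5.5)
Your plan is correct and is essentially the paper's proof. Both arguments use an $L^1$ sectorial bound for the frozen operator $\lambda-a_0\Delta$, taken from the analyticity of the heat semigroup. Both use a partition of unity on balls of radius comparable to $1+|x|$ to freeze $1+|x|^\alpha$, and the weighted interpolation inequality from Step~1 of Proposition~\ref{closure} to absorb the drift and commutator gradient terms. Both use $\beta>\alpha-2$ to absorb the $|x|^{\alpha-2}u$ terms, and both get compactness of $T_1(t)$ from analyticity plus the compact resolvent of Corollary~\ref{cor:optimal-bound-compact-resolvent}.

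The differences are minor. The paper also freezes the potential (taking $w_0=W_j$), works directly on $\Sigma_\theta$ with $|\lambda|\geq R$, and closes with a connectedness argument. On your half-plane $\operatorname{Re}\lambda>\omega$, invertibility is already given by Theorem~\ref{thm:main}. There the Kato inequality of Lemma~\ref{kato} supplies the $\lambda$-uniform bound on $\||x|^\beta u\|_1$ that your unfrozen potential needs, and the Neumann series then opens the half-plane to a sector.
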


\subsection{Related results}
\label{subsec:related-results}

The operator \eqref{def-A} and several of its special cases have been studied in $L^p$ spaces. For $b=c=0$, Metafune and Spina \cite{MS} established analytic semigroup generation for $d\geq3$, $\alpha>2$ and $ \frac{d}{d-2}<p<\infty$. They also investigated the generator domain and spectral properties. Metafune and Spina~\cite{MS14} also studied the degenerate operator $|x|^\alpha\Delta$. For $d\geq3$ and $\alpha>2$, they established analytic semigroup generation in
$L^p(\mathbb R^d)$ for $d/(d-2)<p<\infty$ and showed that this restriction is sharp.

The case $c=0$ was considered by Metafune, Spina and Tacelli \cite{MST}. For $\alpha>2$, they obtained analytic semigroup generation for $1<p<\infty$ satisfying $b>2-d$ and $p>\frac{d}{d-2+b}$ as well as domain characterizations and kernel estimates. Thus, without the potential term, the admissible range of $p$ depends on the dimension and the drift coefficient.
Metafune, Okazawa, Sobajima and Spina~\cite{MOSS} studied scale invariant elliptic operators with diffusion
$|x|^\alpha$, radial drift and a potential proportional to $|x|^{\alpha-2}$. They determined the parameter ranges
for semigroup generation in $L^p(\mathbb R^d)$, $1<p<\infty$, and characterized the generator domain.
Here, the diffusion is $1+|x|^\alpha$ and the potential has exponent $\beta>\alpha-2$.

For $b=0$ and $c=1$, Canale, Rhandi and Tacelli \cite{CRT} studied the Schr\"odinger-type operator
$
 (1+|x|^\alpha)\Delta-|x|^\beta.
$
Under the assumptions $d\geq3$, $\alpha>2$ and $\beta>\alpha-2$, they proved analytic semigroup
generation for every $1<p<\infty$ and characterized the generator domain through weighted Sobolev estimates.

The full operator \eqref{def-A}, with arbitrary $b\in\mathbb R$ and $c>0$, was studied by Boutiah, Gregorio, Rhandi and Tacelli \cite{boutiah et al}. For $1<p<\infty$, they established positive analytic semigroup generation, domain characterization and consistency, together with compactness and ultracontractivity.
Subsequently, Boutiah, Rhandi and Tacelli \cite{BRT} obtained heat kernel bounds and eigenfunction estimates. Complementary lower bounds were established by Boutiah \cite{B}, using the Davies--Simon method
in a weighted symmetric setting. These bounds exhibit the same spatial decay as the upper
estimates.

Lorenzi and Rhandi~\cite{LR} studied $(1+|x|^\alpha)\Delta-|x|^\beta$ for $0\leq\alpha\leq2$ and $\beta\geq0$, establishing analytic semigroup generation in $L^p(\mathbb R^d)$, $1<p<\infty$. They also characterized the generator domain and derived heat kernel estimates under additional assumptions.

A different regime was investigated by Boutiah, Caso, Gregorio and Tacelli \cite{BCGT}. For $0\leq\alpha<2$, they considered an additional potential proportional to $|x|^{\alpha-2}$. Using form methods and perturbation arguments,
they established generation results and conditions identifying the generators with closures of operators defined on smooth functions.

Fornaro and Lorenzi~\cite{FL} established generation and domain results in $L^1$ for operators of the form $a\Delta+F\cdot\nabla$. Their assumptions imply $a(x)\le C(1+|x|^2)$ and require $\Delta a$ to be bounded. These conditions are used in their estimates and in the proof of analyticity. They fail for $a(x)=1+|x|^\alpha$ when $\alpha>2$, since $a$ grows faster than $|x|^2$ and $\Delta a=\alpha(d+\alpha-2)|x|^{\alpha-2}$ is unbounded. Their $L^1$ theorem therefore does not apply directly to~\eqref{def-A}.

Sobajima~\cite{S} established $m$-accretivity and $m$-sectoriality results for second-order elliptic
operators with unbounded coefficients in $L^p(\mathbb R^d)$, $1<p<\infty$. His analysis includes a limiting case of the growth conditions on the coefficients, while the present work addresses the endpoint $p=1$.

At the endpoint $p=1$, generation and domain results for operators with constant diffusion and unbounded
lower-order coefficients were obtained by Metafune, Pr\"uss, Rhandi and Schnaubelt \cite[Section~5]{MPRS}.

Ouhabaz \cite[Theorem 2.4]{O} proved \(L^1\)-analyticity for self-adjoint semigroups satisfying Gaussian domination (which fails here because the diffusion coefficient is unbounded). His applications to elliptic operators assume bounded principal coefficients \cite[Theorem 3.1]{O}. These results do not directly apply to our operator $\Lambda$, which is also nonsymmetric in $L^2(\mathbb{R}^d,dx)$ when $b\ne\alpha$.

\medskip

\noindent
\textbf{Notation.}
All function spaces are considered over $\mathbb C$, with respect to Lebesgue measure. For $1\leq p\leq\infty$, we write
$L^p=L^p(\mathbb R^d)$ and denote its norm by $\|\cdot\|_p$. We denote by $C_c^\infty(\mathbb R^d)$ the space of smooth functions with compact support, and by $C_0(\mathbb R^d)$ the space of continuous functions vanishing at infinity.
The Sobolev spaces are denoted by $W^{k,p}(\mathbb R^d)$.  All derivatives are understood in the distributional sense.
For $x\in\mathbb R^d$ and $r>0$, we set $ B(x,r):=\{y\in\mathbb R^d:|y-x|<r\}$, and $B_r:=B(0,r)$. Complex conjugation and real parts are denoted by $\overline{u}$ and $\operatorname{Re}u$, respectively.

For a Banach space $X$, $\mathcal L(X)$ denotes the space of bounded linear operators on $X$. For a closed operator
$A$ on $X$, we write $\rho(A)$ for its resolvent set and
$
 R(\lambda,A):=(\lambda I-A)^{-1}$,
$\lambda\in\rho(A)$, where $I$ is the identity operator. For $0<\theta<\pi$, we set
$
 \Sigma_\theta
 :=\{z\in\mathbb C\setminus\{0\}:|\arg z|<\theta\},
$
where $\arg z$ is the principal argument.

The letter $C$ denotes a positive constant whose value may change from line to line. Dependence on additional
parameters is indicated when needed.

\subsection*{Acknowledgements}
I thank Damir Kinzebulatov for discussions regarding this paper.

\section{Weighted estimates and domain characterization}
\label{sec:weighted-estimates}

The following Kato-type inequality shows that $\Lambda-\omega_1 I$, with domain $C_c^\infty(\mathbb R^d;\mathbb C)$, is dissipative in $L^1(\mathbb R^d)$.

\begin{lemma}\label{kato}
For every $u\in C_c^\infty(\mathbb R^d;\mathbb C)$, with the
convention $\overline u/|u|=0$ on $\{u=0\}$, one has
\begin{equation}\label{kato-est}
 \Rea\int_{\mathbb R^d}
 \frac{\overline u}{|u|}\Lambda u\dd
 \leq
 \int_{\mathbb R^d}
 \bigl((\alpha-b)(d+\alpha-2)|x|^{\alpha-2}
       -c|x|^\beta\bigr)|u|\dd.
\end{equation}
Consequently, for every real $\lambda>\omega_1$,
\[
 (\lambda-\omega_1)\|u\|_1
 \leq \|(\lambda-\Lambda)u\|_1.
\]
\end{lemma}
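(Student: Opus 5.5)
We prove the Kato-type inequality by regularizing $|u|$ and integrating by parts. For $\varepsilon>0$ set $u_\varepsilon:=(|u|^2+\varepsilon^2)^{1/2}$. This is smooth, and $u_\varepsilon-\varepsilon$ has compact support. We treat the three parts of $\Lambda$ separately.

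\textbf{Potential term.} Since $\Rea(\overline u\,(-c|x|^\beta u))/|u|=-c|x|^\beta|u|$, this term contributes exactly $-c\int|x|^\beta|u|\dd$.

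\textbf{Second-order term.} Write $a(x):=1+|x|^\alpha$. The pointwise identity
\[
\Rea(\overline u\Delta u)=\tfrac12\Delta|u|^2-|\nabla u|^2
\]
together with $|\nabla u_\varepsilon|^2\,u_\varepsilon^2\le |u|^2|\nabla u|^2\le u_\varepsilon^2|\nabla u|^2$ (Cauchy--Schwarz applied to $\nabla|u|^2=2\Rea(\overline u\nabla u)$) yields the regularized Kato inequality
\[
\Rea\frac{\overline u\,\Delta u}{u_\varepsilon}\le \Delta u_\varepsilon .
\]
Multiplying by $a\ge0$ and integrating, then using that $u_\varepsilon-\varepsilon\in C_c^\infty$ and integrating by parts twice, gives
\[
\int a\,\Delta u_\varepsilon\dd=\int a\,\Delta(u_\varepsilon-\varepsilon)\dd=\int(\Delta a)(u_\varepsilon-\varepsilon)\dd.
\]
Here $\Delta a=\alpha(d+\alpha-2)|x|^{\alpha-2}$, which is locally integrable (indeed continuous, since $\alpha>2$).

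\textbf{Drift term.} Set $F:=b|x|^{\alpha-2}x$. Then
\[
\Rea\frac{\overline u\,F\cdot\nabla u}{u_\varepsilon}=F\cdot\nabla u_\varepsilon ,
\]
since $\nabla u_\varepsilon=\Rea(\overline u\nabla u)/u_\varepsilon$. Integrating by parts against $u_\varepsilon-\varepsilon$ gives $-\int(\operatorname{div}F)(u_\varepsilon-\varepsilon)\dd$, with $\operatorname{div}F=b(d+\alpha-2)|x|^{\alpha-2}$. This is legitimate because $F\in C^1$ for $\alpha>2$, or at least $W^{1,1}_{\rm loc}$ with a continuous divergence.

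\textbf{Passing to the limit.} Adding the three contributions,
\[
\Rea\int\frac{\overline u}{u_\varepsilon}\Lambda u\dd\le\int\bigl((\alpha-b)(d+\alpha-2)|x|^{\alpha-2}\bigr)(u_\varepsilon-\varepsilon)\dd-c\int|x|^\beta\frac{|u|^2}{u_\varepsilon}\dd .
\]
We now let $\varepsilon\to0$. All integrands are supported in $\operatorname{supp}u$, which is compact, except for the factor $u_\varepsilon-\varepsilon$, which also vanishes off $\operatorname{supp}u$. Moreover $\overline u/u_\varepsilon\to\overline u/|u|$ pointwise (equal to $0$ on $\{u=0\}$), $u_\varepsilon-\varepsilon\to|u|$, and $|u|^2/u_\varepsilon\to|u|$, all with bounded dominating functions. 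Dominated convergence then yields \eqref{kato-est}.

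\textbf{The consequence.} By definition \eqref{omega1} of $\omega_1$, the right side of \eqref{kato-est} is at most $\omega_1\|u\|_1$. Hence for real $\lambda>\omega_1$,
\[
\|(\lambda-\Lambda)u\|_1\ge\Rea\int\frac{\overline u}{|u|}(\lambda-\Lambda)u\dd\ge(\lambda-\omega_1)\|u\|_1 ,
\]
where the first inequality holds because $|\overline u/|u||\le1$, and the second uses $\int\frac{\overline u}{|u|}\lambda u\dd=\lambda\|u\|_1$.

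The main obstacle is justifying the second-order step, namely the pointwise inequality $\Rea(\overline u\Delta u)/u_\varepsilon\le\Delta u_\varepsilon$ and the integration by parts despite $u_\varepsilon\not\to0$ at infinity. Both are handled by the regularization and by subtracting the constant $\varepsilon$, which keeps all boundary terms zero.
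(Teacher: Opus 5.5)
Your proof is correct and follows essentially the same route as the paper. You regularize with $\sqrt{|u|^2+\varepsilon^2}-\varepsilon$, prove the pointwise Kato inequality for $\Delta$ and the exact identity for the drift, integrate by parts onto $\Delta(1+|x|^\alpha)$ and $\operatorname{div}(b|x|^{\alpha-2}x)$, and let $\varepsilon\downarrow0$ by dominated convergence; the differences are cosmetic (you keep the potential term exactly as $-c|x|^\beta|u|^2/u_\varepsilon$ instead of bounding it by $-c|x|^\beta v_\varepsilon$, and you obtain the pointwise inequality from $\Rea(\overline u\Delta u)=\tfrac12\Delta|u|^2-|\nabla u|^2$).
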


\begin{proof}
For $\varepsilon>0$, set $ v_\varepsilon:=\sqrt{|u|^2+\varepsilon^2}-\varepsilon$. Then $v_\varepsilon\in C_c^\infty(\mathbb R^d)$,
$\operatorname{supp}v_\varepsilon\subseteq\operatorname{supp}u$,
and $0\leq v_\varepsilon\leq|u|$.
The chain rule gives
\begin{equation}\label{nabla-v}
 \nabla v_\varepsilon =\frac{\Rea(\overline u\,\nabla u)}{\sqrt{|u|^2+\varepsilon^2}}
\end{equation}
and
\begin{align*}
 \Delta v_\varepsilon
 -\frac{\Rea(\overline u\,\Delta u)}
       {\sqrt{|u|^2+\varepsilon^2}}
 &\geq
 \frac{|\nabla u|^2}{\sqrt{|u|^2+\varepsilon^2}}
 -\frac{|\Rea(\overline u\,\nabla u)|^2}
       {(|u|^2+\varepsilon^2)^{3/2}}
 \geq
 \frac{\varepsilon^2|\nabla u|^2}
      {(|u|^2+\varepsilon^2)^{3/2}}
 \geq0,
\end{align*}
where we used the Cauchy--Schwarz inequality $ |\Rea(\overline u\,\nabla u)|^2 \leq |u|^2|\nabla u|^2$. We have,
$$
 \frac{|u|^2}{\sqrt{|u|^2+\varepsilon^2}} -v_\varepsilon
 =
 \varepsilon -\frac{\varepsilon^2}{\sqrt{|u|^2+\varepsilon^2}} \geq 0.
$$
Together with \eqref{nabla-v} and the positivity of $1+|x|^\alpha$ and $c$, these yield
\[
 \Rea\left(
 \frac{\overline u}{\sqrt{|u|^2+\varepsilon^2}}\Lambda u
 \right)
 \leq \Lambda v_\varepsilon.
\]
Since $v_\varepsilon$ is smooth and compactly supported, integration by parts gives
\begin{align*}
 \Rea\int_{\mathbb R^d}
 \frac{\overline u}{\sqrt{|u|^2+\varepsilon^2}}
 \Lambda u\dd
 &\leq \int_{\mathbb R^d}\Lambda v_\varepsilon\dd\\
 &=\int_{\mathbb R^d}
 \Bigl[
 \Delta(1+|x|^\alpha)
 -b\,\operatorname{div}(|x|^{\alpha-2}x)
 -c|x|^\beta
 \Bigr]v_\varepsilon\dd\\
 &=\int_{\mathbb R^d}
 \bigl((\alpha-b)(d+\alpha-2)|x|^{\alpha-2}
       -c|x|^\beta\bigr)v_\varepsilon\dd.
\end{align*}
To pass to the limit, observe that
$$
\left|
 \frac{\overline u}{\sqrt{|u|^2+\varepsilon^2}}\Lambda u
 \right|
 \leq|\Lambda u|\in L^1(\mathbb R^d),
$$
and 
$$
\left|
 \bigl((\alpha-b)(d+\alpha-2)|x|^{\alpha-2}
       -c|x|^\beta\bigr)v_\varepsilon
 \right| \leq
 \bigl(|\alpha-b|(d+\alpha-2)|x|^{\alpha-2}
       +c|x|^\beta\bigr)|u|
 \in L^1(\mathbb R^d).
$$
Since, pointwise as $\varepsilon\downarrow0$, $v_\varepsilon \to |u|$, and  $\frac{\overline u}{\sqrt{|u|^2+\varepsilon^2}}
 \to \frac{\overline u}{|u|}$, dominated convergence gives \eqref{kato-est}. Finally, \eqref{kato-est} and the definition of $\omega_1$
imply that, for every real $\lambda>\omega_1$,
\begin{align*}
 \|(\lambda-\Lambda)u\|_1
 &\geq
 \Rea\int_{\mathbb R^d}
 \frac{\overline u}{|u|}(\lambda-\Lambda)u\dd\\
 &=\lambda\|u\|_1
 -\Rea\int_{\mathbb R^d}
 \frac{\overline u}{|u|}\Lambda u\dd\\
 &\geq(\lambda-\omega_1)\|u\|_1.
\end{align*}
\end{proof}

The following weighted estimates yield an explicit description of the closure of $(\Lambda,\Cc)$ in $L^1(\Rd)$.

\begin{proposition}\label{closure}
The operator $(\Lambda,\Cc)$ is closable in $L^1(\mathbb R^d)$.
Its closure is $(\Lambda,\mathcal{D}(\Lambda))$. Moreover, there exists $C>0$ such that
\begin{equation}\label{graph-estimate}
 \||x|^\beta u\|_1
 +\|(1+|x|^{\alpha-1})\nabla u\|_1
 +\|(1+|x|^\alpha)\Delta u\|_1
 \leq C\bigl(\|\Lambda u\|_1+\|u\|_1\bigr),
 \qquad u\in \mathcal{D}(\Lambda).
\end{equation}
\end{proposition}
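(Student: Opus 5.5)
The plan is to prove the a priori estimate \eqref{graph-estimate} first for $u\in\Cc$, and then obtain closability and the identification of the closure by approximation. Closability follows from the dissipativity in Lemma~\ref{kato}: a densely defined operator $A$ with $A-\omega_1$ dissipative is closable. It can also be seen directly, since $\Lambda$ has smooth coefficients and is continuous from $L^1$ into distributions.

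\textbf{Potential term.} Write $V:=c|x|^\beta$ and $W:=(\alpha-b)(d+\alpha-2)|x|^{\alpha-2}$. Lemma~\ref{kato} gives
\[
\int (V-W)|u|\,dx\le -\Rea\int \frac{\overline u}{|u|}\Lambda u\,dx\le \|\Lambda u\|_1 .
\]
Since $\beta>\alpha-2$, we have $|W|\le \tfrac12 V+C$. Hence $\||x|^\beta u\|_1\le C(\|\Lambda u\|_1+\|u\|_1)$.

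\textbf{Weighted Laplacian and gradient.} Put $a:=1+|x|^\alpha$ and $F:=a\Delta u+b|x|^{\alpha-2}x\cdot\nabla u=\Lambda u+Vu$. The previous step gives $\|F\|_1\le C(\|\Lambda u\|_1+\|u\|_1)$. It then suffices to bound $\|(1+|x|^{\alpha-1})\nabla u\|_1$ by $\varepsilon\|a\Delta u\|_1+C_\varepsilon(\|u\|_1+\||x|^\beta u\|_1)$, because then
\[
\|a\Delta u\|_1\le \|F\|_1+|b|\,\||x|^{\alpha-1}\nabla u\|_1
\]
and the small term can be absorbed.

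For this weighted interpolation I would use a dyadic localization. Take annuli $A_k=\{2^{k-1}<|x|<2^{k+1}\}$ with cutoffs $\eta_k$ satisfying $|\nabla\eta_k|\lesssim 2^{-k}$ and $|D^2\eta_k|\lesssim 2^{-2k}$. Rescaling the unit-scale $L^1$ interpolation inequality
\[
\|\nabla v\|_{L^1(B)}\le \delta\|\Delta v\|_{L^1(B')}+C\delta^{-1}\|v\|_{L^1(B')},
\]
which holds for $L^1$ via the representation $\nabla v=\nabla\Gamma*\Delta v$ for compactly supported $v$ (a gradient of the Newtonian kernel is locally integrable, so no Calder\'on--Zygmund is needed), applied to $\eta_k u$, gives on $A_k$ with $R=2^k$:
\[
R^{\alpha-1}\|\nabla u\|_{L^1(A_k)}\le \delta R^{\alpha}\|\Delta u\|_{L^1(\tilde A_k)}+C\delta^{-1}R^{\alpha-2}\|u\|_{L^1(\tilde A_k)}+C R^{\alpha-2}\|u\|_{L^1(\tilde A_k)} .
\]
Because $\alpha-2<\beta$, the term $R^{\alpha-2}$ is bounded by $\varepsilon' R^\beta+C$. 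Summing over $k$, together with the analogous estimate on a fixed ball, gives the desired bound. The constant coming from $\delta$ is uniform because the interpolation constant is scale-free. A subtlety is that $\|\Delta(\eta_k u)\|_1$ contains $\nabla u$ terms of size $R^{-1}$. These are absorbed into the left-hand side after multiplying by $\delta$, using finite overlap of the annuli. This absorption is the step I expect to be the main obstacle: the cross terms must be handled without a Calder\'on--Zygmund estimate, so every bound has to go through the fundamental-solution representation for the gradient only. I would also use $\Delta u\in L^1_{\mathrm{loc}}$ to get $\nabla u\in W^{1,1}_{\mathrm{loc}}$-type control where needed.

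\textbf{Identification of the closure.} The estimate shows that the closure's domain is contained in $\mathcal D(\Lambda)$, with $\Lambda$ acting distributionally. For the converse, take $u\in\mathcal D(\Lambda)$ and set $u_n=\phi(x/n)u$. Then $u_n\to u$ in the norm \eqref{domain-norm}. The commutator terms are $a\nabla\phi_n\cdot\nabla u\lesssim |x|^{\alpha-1}|\nabla u|\chi_{|x|\sim n}$ and $a|\Delta\phi_n|u\lesssim |x|^{\alpha-2}|u|\chi_{|x|\sim n}$; both go to zero since the weighted gradient and $|x|^\beta u$ are integrable. Next, mollify the compactly supported $u_n$. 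Its Laplacian is in $L^1$, and on a compact set the weights are bounded, so $u_n*\rho_\epsilon\to u_n$ in the norm. Since $\|\Lambda v\|_1+\|v\|_1$ is dominated by \eqref{domain-norm}, $\Cc$ is dense in $\mathcal D(\Lambda)$ for the graph norm. Finally, the estimate extends from $\Cc$ to $\mathcal D(\Lambda)$ by density.
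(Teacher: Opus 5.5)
Your outline is the paper's proof. It uses Kato's inequality for the potential, a dyadic localization of an unweighted $L^1$ interpolation inequality with the cutoff cross terms absorbed, the definition of $\Lambda$ to recover the weighted Laplacian, and cutoff plus mollification for the core. The absorption is legitimate for $u\in\Cc$, where the weighted gradient norm is finite. Proving closability through dissipativity of $\Lambda-\omega_1$ is a valid alternative to the paper's Cauchy-sequence argument.

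The one step whose justification does not work as written is the unit-scale inequality $\|\nabla v\|_1\le\delta\|\Delta v\|_1+C\delta^{-1}\|v\|_1$. Local integrability of $\nabla\Gamma$ alone only gives $\|\nabla v\|_1\le Cr\|\Delta v\|_1$ when $\operatorname{supp}v\subset B_r$, with no small parameter. If you try to produce $\delta$ by localizing at scale $\delta$, the term $2\nabla\psi\cdot\nabla v$ in $\Delta(\psi v)$ returns $C\|\nabla v\|_1$ with a constant that is not small, so it cannot be absorbed.

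You need to split the kernel. Take $\zeta\in\Cc$ equal to $1$ on $B_1$ and supported in $B_2$, and put $K_1:=\zeta(\cdot/\delta)\nabla\Gamma$ and $K_2:=\nabla\Gamma-K_1$. Then $\|K_1\|_1\le C\delta$. Also $K_2$ is smooth, vanishes on $B_\delta$, and is harmonic outside $B_{2\delta}$, so $K_2*\Delta v=(\Delta K_2)*v$ with $\|\Delta K_2\|_1\le C\delta^{-1}$. This gives the inequality for every $v\in\Cc$ and every $\delta>0$, which is the scale-invariant form your rescaling to the annuli requires. The paper's identity $v=P_tv-\int_0^tP_s\Delta v\,ds$ is exactly a smooth version of this splitting: the integral over $(0,t)$ is the near part and $P_t$ is the far part.

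Finally, drop the remark that $\Delta u\in L^1_{\mathrm{loc}}$ gives $W^{1,1}_{\mathrm{loc}}$-type control of $\nabla u$. That is precisely the Calder\'on--Zygmund bound that fails at $p=1$. You also do not need it. You prove \eqref{graph-estimate} on $\Cc$, and your cutoff--mollification approximants converge in the norm \eqref{domain-norm}, which carries both sides of the estimate to the limit.
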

\begin{proof}
The proof is divided into four steps. We first establish a weighted interpolation estimate. We then combine it with the Kato inequality to obtain the graph estimate on $C_c^\infty(\mathbb R^d)$. The third step proves closability.
Finally, approximation by cutoffs and mollification identifies the domain of the closure and shows that
$C_c^\infty(\mathbb R^d)$ is a core.

Throughout the proof, $C>0$ denotes a constant independent of the functions being estimated.

\medskip
\noindent
\emph{Step 1.}
We claim that, for every $\varepsilon>0$, there exists
$C_\varepsilon>0$ such that
\begin{equation}\label{weighted-interpolation}
 \|(1+|x|^{\alpha-1})\nabla u\|_1
 \leq \varepsilon\|(1+|x|^\alpha)\Delta u\|_1
      +C_\varepsilon\|(1+|x|^{\alpha-2})u\|_1,
 \qquad u\in\Cc.
\end{equation}

Let $P_t:=e^{t\Delta}$ be the heat semigroup. For $v\in\Cc$ and $t>0$,
\[
 v=P_tv-\int_0^tP_s\Delta v\,ds,
 \qquad
 \|\nabla P_sf\|_1\leq C_d s^{-1/2}\|f\|_1.
\]
Consequently,
\[
 \|\nabla v\|_1
 \leq C_d t^{-1/2}\|v\|_1
      +2C_d t^{1/2}\|\Delta v\|_1.
\]
For any $\rho>0$, choosing $t=(\rho/(2C_d))^2$ gives
\begin{equation}\label{unweighted}
 \|\nabla v\|_1
 \leq \rho\|\Delta v\|_1
      +2C_d^2\rho^{-1}\|v\|_1,
 \qquad v\in\Cc.
\end{equation}
Choose a radial function $\varphi\in\Cc$, nonincreasing in $|x|$,
such that
\[
0\leq\varphi\leq1,\qquad
\varphi=1\ \text{on }B_1,\qquad
\varphi=0\ \text{outside }B_2.
\]
Set
\[
\theta_0(x):=\varphi(x),\qquad
\theta_j(x):=\varphi(2^{-j}x)-\varphi(2^{1-j}x),
\quad j\geq1.
\]
The monotonicity of $\varphi$ gives $\theta_j\geq0$. Summing the definitions, we obtain
$
\sum_{j=0}^N\theta_j(x)=\varphi(2^{-N}x).
$
For each fixed $x$, we have $2^{-N}x\in B_1$ when $N$ is sufficiently large. Since $\varphi=1$ on $B_1$, it follows that
$
\sum_{j\geq0}\theta_j(x)=1
$
for every $x\in\mathbb R^d$, this shows that $(\theta_j)_{j\geq0}$ is a smooth partition of unity. Moreover,
\[
\operatorname{supp}\theta_0\subseteq\{|x|\leq2\},
\qquad
\operatorname{supp}\theta_j
\subseteq\{2^{j-1}\leq|x|\leq2^{j+1}\},
\quad j\geq1.
\]
Each point belongs to at most three of these supports. For $j\geq1$, these inclusions imply
\[
2^{j-1}\leq1+|x|\leq1+2^{j+1}\leq3\,2^j
\quad\text{on }\operatorname{supp}\theta_j.
\]
The same bounds hold for $j=0$, since $|x|\leq2$ on $\operatorname{supp}\theta_0$.

For the derivatives, the chain rule gives, for $j\geq1$,
\[
\begin{aligned}
\nabla\theta_j(x)
&=2^{-j}\nabla\varphi(2^{-j}x)
  -2^{1-j}\nabla\varphi(2^{1-j}x),\\
\Delta\theta_j(x)
&=2^{-2j}\Delta\varphi(2^{-j}x)
  -2^{2-2j}\Delta\varphi(2^{1-j}x).
\end{aligned}
\]
Since $\nabla\varphi$ and $\Delta\varphi$ are bounded and $\theta_0=\varphi$, we obtain, for every $j\geq0$,
\begin{equation}\label{dyadic-bounds}
\begin{gathered}
2^{j-1}\leq1+|x|\leq3\,2^j
\quad\text{on }\operatorname{supp}\theta_j,\\
|\nabla\theta_j(x)|\leq C2^{-j},
\qquad
|\Delta\theta_j(x)|\leq C2^{-2j}
\quad\text{for all }x\in\mathbb R^d,
\end{gathered}
\end{equation}
where $C$ depends only on $\varphi$.  Let $0<\delta\leq1$. Applying \eqref{unweighted} to $\theta_j u$ with $\rho=\delta2^j$, we obtain
\[
\|\nabla(\theta_j u)\|_1
\leq
\delta2^j\|\Delta(\theta_j u)\|_1
+C\delta^{-1}2^{-j}\|\theta_j u\|_1.
\]
Since $\sum_{j\geq0}\theta_j=1$, we have
\[
\nabla u=\sum_{j\geq0}\nabla(\theta_j u).
\]
Thus, \eqref{dyadic-bounds} gives
\begin{align*}
\|(1+|x|^{\alpha-1})\nabla u\|_1
&\leq
C\sum_{j\geq0}2^{j(\alpha-1)}
\|\nabla(\theta_j u)\|_1\\
&\leq
C\delta\sum_{j\geq0}2^{j\alpha}
\|\Delta(\theta_j u)\|_1
+C\delta^{-1}\sum_{j\geq0}2^{j(\alpha-2)}
\|\theta_j u\|_1.
\end{align*}
We now use 
$
\Delta(\theta_j u) = \theta_j\Delta u +2\nabla\theta_j\cdot\nabla u +u\Delta\theta_j.
$
The derivative bounds in \eqref{dyadic-bounds}, together with the fact that each point belongs to at most three
supports, yield
\begin{align}
\|(1+|x|^{\alpha-1})\nabla u\|_1
&\leq
C\delta\|(1+|x|^\alpha)\Delta u\|_1
+C\delta\|(1+|x|^{\alpha-1})\nabla u\|_1
\notag\\
&\quad+
C\delta^{-1}\|(1+|x|^{\alpha-2})u\|_1.
\label{localized-interpolation}
\end{align}
Here we used $\delta\leq1$, and $C$ is independent of $\delta$ and $u$. Given $\varepsilon>0$, choose
\[
0<\delta\leq
\min\left\{1,\frac{1}{2C},\frac{\varepsilon}{2C}\right\}.
\]
Moving the gradient term on the right-hand side of \eqref{localized-interpolation} to the left and using
$1-C\delta\geq\frac12$, we obtain
\[
\|(1+|x|^{\alpha-1})\nabla u\|_1
\leq
2C\delta\|(1+|x|^\alpha)\Delta u\|_1
+2C\delta^{-1}\|(1+|x|^{\alpha-2})u\|_1.
\]
Since $2C\delta\leq\varepsilon$, this proves \eqref{weighted-interpolation} with
$C_\varepsilon=2C\delta^{-1}$.

\medskip
\noindent
\emph{Step 2.}
Let $u\in\Cc$. Since $\beta>\alpha-2$ and $c>0$, there exists $C_0>0$ such that
\[
 (\alpha-b)(d+\alpha-2)|x|^{\alpha-2}
 \leq \frac c2|x|^\beta+C_0.
\]
Lemma~\ref{kato} therefore gives
\[
 \frac c2\||x|^\beta u\|_1
 \leq C_0\|u\|_1
 -\Rea\int_{\mathbb R^d}
       \frac{\overline u}{|u|}\Lambda u\dd
 \leq C_0\|u\|_1+\|\Lambda u\|_1.
\]
Since $1+|x|^{\alpha-2}\leq C(1+|x|^\beta)$, we obtain
\begin{equation}\label{potential-control}
 \||x|^\beta u\|_1
 +\|(1+|x|^{\alpha-2})u\|_1
 \leq C\bigl(\|\Lambda u\|_1+\|u\|_1\bigr).
\end{equation}

The definition of $\Lambda$ implies
\begin{equation}\label{laplacian-control}
 \|(1+|x|^\alpha)\Delta u\|_1
 \leq \|\Lambda u\|_1
      +|b|\|(1+|x|^{\alpha-1})\nabla u\|_1
      +c\||x|^\beta u\|_1.
\end{equation}
Combining \eqref{weighted-interpolation}, \eqref{potential-control} and \eqref{laplacian-control}, we find
\[
 \|(1+|x|^{\alpha-1})\nabla u\|_1
 \leq
 \varepsilon|b|\|(1+|x|^{\alpha-1})\nabla u\|_1
 +C_\varepsilon\bigl(\|\Lambda u\|_1+\|u\|_1\bigr).
\]
Taking $\varepsilon=[2(1+|b|)]^{-1}$  yields
\begin{equation}\label{gradient-control}
 \|(1+|x|^{\alpha-1})\nabla u\|_1
 \leq C\bigl(\|\Lambda u\|_1+\|u\|_1\bigr).
\end{equation}
Substituting \eqref{potential-control} and
\eqref{gradient-control} into \eqref{laplacian-control}
proves \eqref{graph-estimate} for $u\in\Cc$.

\medskip
\noindent
\emph{Step 3.}
Suppose that $ u_n\in\Cc$, $u_n\rightarrow u$, and $\Lambda u_n\rightarrow f$ in $L^1(\mathbb R^d)$. Applying \eqref{graph-estimate} to $u_n-u_m$ shows that $|x|^\beta u_n$, $(1+|x|^{\alpha-1})\nabla u_n$, and $(1+|x|^\alpha)\Delta u_n$ are Cauchy sequences in $L^1$. Since the derivative weights are bounded below by one,
$\nabla u_n$ and $\Delta u_n$ also converge in $L^1$. Their limits are $\nabla u$ and $\Delta u$,
respectively. Thus $ u\in W^{1,1}(\mathbb R^d)$ and $\Delta u\in L^1(\mathbb R^d)$. Local boundedness of the weights identifies the weighted limits:
\begin{equation}\label{weighted-limits}
 \begin{aligned}
  |x|^\beta u_n
  &\rightarrow |x|^\beta u,\\
  (1+|x|^{\alpha-1})\nabla u_n
  &\rightarrow (1+|x|^{\alpha-1})\nabla u,\\
  (1+|x|^\alpha)\Delta u_n
  &\rightarrow (1+|x|^\alpha)\Delta u
 \end{aligned}
 \qquad\text{in }L^1.
\end{equation}
Consequently, $u\in\mathcal D(\Lambda)$. By \eqref{weighted-limits}, each term in $\Lambda u_n$
converges in $L^1$ to the corresponding term in $\Lambda u$. Hence $f=\Lambda u$.

In particular, $u=0$ implies $f=0$, so $(\Lambda,\Cc)$ is closable and
\begin{equation}\label{closure-inclusion}
 \overline{(\Lambda,\Cc)}
 \subseteq(\Lambda,\mathcal D(\Lambda)).
\end{equation}

\medskip
\noindent
\emph{Step 4.}
Let $u\in\mathcal D(\Lambda)$. Using the function $\varphi$ chosen in Step~1, set
\[
 \varphi_R(x):=\varphi(x/R),
 \qquad
 u_R:=\varphi_Ru,
 \qquad R\geq1.
\]
The product rule gives $u_R\in\mathcal D(\Lambda)$ and
\begin{equation}\label{cutoff-commutator}
 \begin{aligned}
  \Lambda u_R-\varphi_R\Lambda u
  &=2(1+|x|^\alpha)\nabla\varphi_R\cdot\nabla u\\
  &\quad+
  \bigl((1+|x|^\alpha)\Delta\varphi_R
        +b|x|^{\alpha-2}x\cdot\nabla\varphi_R\bigr)u.
 \end{aligned}
\end{equation}
The derivatives of $\varphi_R$ are supported in $\{R\leq|x|\leq2R\}$ and satisfy
$
 |\nabla\varphi_R|\leq CR^{-1}$ and 
 $
 |\Delta\varphi_R|\leq CR^{-2}.
$
It follows from \eqref{cutoff-commutator} that
\begin{align*}
 \|\Lambda u_R-\Lambda u\|_1
 &\leq \|(\varphi_R-1)\Lambda u\|_1\\
 &\quad+
 C\int_{\{R\leq|x|\leq2R\}}
 \bigl((1+|x|^{\alpha-1})|\nabla u|
       +(1+|x|^{\alpha-2})|u|\bigr)\dd
 \rightarrow 0.
\end{align*}
The integrand belongs to $L^1$ because $u\in\mathcal D(\Lambda)$ and
$1+|x|^{\alpha-2}\leq C(1+|x|^\beta)$. Since also $u_R\to u$ in $L^1$, we obtain
\begin{equation}\label{cutoff-convergence}
 \|u_R-u\|_1+\|\Lambda u_R-\Lambda u\|_1
 \longrightarrow0
 \qquad(R\to\infty).
\end{equation}
Fix $R\geq1$. Choose $\eta\in C_c^\infty(B_1)$ with
$\eta\geq0$ and $\int_{\mathbb R^d}\eta\dd=1$, and set
\[
 \eta_\delta(x):=\delta^{-d}\eta(x/\delta),
 \qquad
 u_{R,\delta}:=\eta_\delta*u_R.
\]
For $0<\delta<1$, we have
$
 u_{R,\delta}\in\Cc$ and
 $
 \operatorname{supp}u_{R,\delta}\subset B_{2R+1}.
$
Since $u_R$, $\nabla u_R$ and $\Delta u_R$ belong to $L^1$, mollification gives
\begin{equation}\label{mollifier-limits}
 \begin{aligned}
  u_{R,\delta}&\rightarrow u_R,\\
  \nabla u_{R,\delta} &\rightarrow\nabla u_R,\\
  \Delta u_{R,\delta} &\rightarrow\Delta u_R
 \end{aligned}
 \qquad\text{in }L^1
 \quad(\delta\downarrow0).
\end{equation}
The coefficients of $\Lambda$ are bounded on $B_{2R+1}$. Thus \eqref{mollifier-limits} implies
\begin{equation}\label{mollification-convergence}
 \begin{aligned}
  &\|u_{R,\delta}-u_R\|_1
   +\|\Lambda u_{R,\delta}-\Lambda u_R\|_1\\
  &\quad\leq C_R\bigl(
   \|u_{R,\delta}-u_R\|_1
   +\|\nabla u_{R,\delta}-\nabla u_R\|_1
   +\|\Delta u_{R,\delta}-\Delta u_R\|_1
   \bigr)
   \rightarrow0.
 \end{aligned}
\end{equation}

By \eqref{mollification-convergence}, for each $n\geq1$ we may choose $0<\delta_n<1/n$ such that $u_{n,\delta_n}\in\Cc$ satisfies
\[
 \|u_{n,\delta_n} -\varphi_nu\|_1
 +\|\Lambda u_{n,\delta_n}-\Lambda \varphi_nu\|_1
 <\frac{1}{n}.
\]
Combining this estimate with \eqref{cutoff-convergence} yields
\begin{equation}\label{core-approximation}
 u_{n,\delta_n} \in \Cc,\qquad
 \| u_{n,\delta_n} -u\|_1+\|\Lambda u_{n,\delta_n} -\Lambda u\|_1
 \longrightarrow0.
\end{equation}
This proves the reverse inclusion in
\eqref{closure-inclusion}, and therefore
\[
 \overline{(\Lambda,\Cc)}
 =(\Lambda,\mathcal D(\Lambda)).
\]

Finally, \eqref{core-approximation} gives the weighted convergences \eqref{weighted-limits} with $u_n=u_{n,\delta_n}$.
Passing to the limit in \eqref{graph-estimate} for $u_{n,\delta_n}$ proves the estimate for every $u\in\mathcal D(\Lambda)$.
\end{proof}

We next show that, for sufficiently large $\lambda$, the $L^p$ resolvents constructed in \cite{boutiah et al}
map $\Cc$ into $\mathcal D(\Lambda)$ for every $1<p<\infty$.

\begin{lemma}\label{range}
There exists $\lambda_*>\omega_1$ such that
\[
R(\lambda,\Lambda_p)f\in \mathcal{D}(\Lambda),
\qquad
\lambda\geq\lambda_*,\quad f\in C_c^\infty(\mathbb R^d),
\quad 1<p<\infty.
\]
For fixed $\lambda$ and $f$, these resolvent values are independent of $p$ and are nonnegative whenever $f\geq0$.
\end{lemma}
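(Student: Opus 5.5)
Since $f\in\Cc$ lies in every $L^p$, $1<p<\infty$, I would first fix $\lambda_*$ larger than the generation thresholds $\omega_p$ of the $L^p$-realizations $\Lambda_p$ from \cite{boutiah et al} and larger than $\omega_1$. I will use the following results from \cite{boutiah et al}: positivity of $R(\lambda,\Lambda_p)$ for real $\lambda$ large, consistency of the resolvents on $L^p\cap L^q$, and the domain characterization
\[
D(\Lambda_p)=\{u\in W^{2,p}:\ |x|^\beta u,\ (1+|x|^{\alpha-1})\nabla u,\ (1+|x|^\alpha)D^2u\in L^p\}.
\]
The thresholds $\omega_p$ may depend on $p$. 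For $p$ near $1$ I would take $\lambda_*$ uniform on a compact range of $p$ and use consistency to transfer the conclusion to all $p$. Consistency of the semigroups for different $p$ gives consistency of the resolvents through the Laplace transform. Hence $u:=R(\lambda,\Lambda_p)f$ is independent of $p$, and positivity gives $u\geq0$ when $f\geq0$. This settles the last sentence of the statement.

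The main task is to show $u\in L^1$ together with the weighted $L^1$ bounds, since membership in each $D(\Lambda_p)$ only gives $L^p$ integrability. I would proceed as follows.

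\textbf{Step 1 (pointwise decay).} Fix $p$ large, say $p>d$. Then $u\in W^{2,p}_{\rm loc}\subset C^1$, so $u$ is a strong solution of $\lambda u-\Lambda u=f$. Outside $\operatorname{supp}f$ it solves the homogeneous equation. I would build a radial supersolution $w(x)=e^{-\kappa|x|^{\gamma}}$, or a large negative power $(1+|x|^2)^{-N}$, with $\lambda w-\Lambda w\geq0$ for $|x|\geq R_0$. The potential $c|x|^\beta$ with $\beta>\alpha-2$ dominates the drift and diffusion contributions of order $|x|^{\alpha-2}$ for a polynomial weight, exactly as in the choice of $C_0$ in Step~2 of Proposition~\ref{closure}. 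Since $u$ vanishes at infinity (it lies in $W^{1,p}$ with $p>d$), a maximum principle comparison on $\{|x|>R_0\}$ gives $|u|\leq Cw$. Here I apply the comparison to $\operatorname{Re}u$ and $\operatorname{Im}u$, or directly to $u\geq0$ after splitting $f$ into positive parts.

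\textbf{Step 2 (weighted $L^1$ bounds).} Choose $N$ large. Then $|x|^\beta u\in L^1$, and in particular $u\in L^1$. Next I test with the cutoffs $\varphi_R$ and apply the local $L^1$ interpolation \eqref{unweighted} on the dyadic pieces $\theta_j u$, as in Step~1 of Proposition~\ref{closure}. A cleaner alternative is to use Hölder on dyadic annuli. On each annulus $A_j$,
\[
\|(1+|x|^{\alpha-1})\nabla u\|_{L^1(A_j)}\leq C2^{jd(1-1/p)}\|(1+|x|^{\alpha-1})\nabla u\|_{L^p(A_j)}.
\]
The $L^p$ norms on $A_j$ decay polynomially, by interior $L^p$ estimates for the rescaled equation combined with the decay of $u$. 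I would therefore get $(1+|x|^{\alpha-1})\nabla u\in L^1$. The Laplacian term needs no separate argument: $(1+|x|^\alpha)\Delta u=\lambda u-f-b|x|^{\alpha-2}x\cdot\nabla u+c|x|^\beta u$, so it lies in $L^1$ once the other terms do. Also $u\in W^{1,1}$ and $\Delta u\in L^1_{\rm loc}$ hold, so $u\in\mathcal D(\Lambda)$.

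\textbf{Main obstacle.} The hard part is Step 1 together with the annular gradient bound. Concretely, I must verify that the barrier computation $\lambda w-\Lambda w\geq0$ works for every $b\in\mathbb R$. It should, because $|x|^\beta$ beats $|x|^{\alpha-2}N^2$ for large $|x|$. I must also make the scaled interior estimates on annuli, where the diffusion coefficient is about $2^{j\alpha}$, produce only polynomial losses in $2^j$. If the annular estimates become cumbersome, a fallback is to use the $L^p$ weighted gradient bound from \cite{boutiah et al} with an additional polynomial weight. That bound can be obtained by applying the same argument to $(1+|x|^2)^{N/2}u$, which satisfies an equation of the same type with lower-order perturbations controlled by the potential.
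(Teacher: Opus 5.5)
Your proposal is correct and follows the paper's proof in all essentials. The barrier $(1+|x|^2)^{-k}$, combined with a comparison argument outside a large ball, gives polynomial decay of $u$; local elliptic estimates turn this into decay of $\nabla u$; and $(1+|x|^\alpha)\Delta u\in L^1$ is then read off from the equation.

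The paper does the gradient step more simply than your annular rescaling plus H\"older. It divides the equation by $1+|x|^\alpha$, which gives a Laplacian principal part and a drift coefficient bounded by $|b|$. It then applies the interior $W^{2,p}$ estimate on unit balls $B(x,1)\subset B(x,2)$ to get the pointwise bound $|\nabla u(x)|\le C(1+|x|)^{\beta-2k}$. Your first suggestion, via the weighted interpolation inequality, is circular as stated. That inequality bounds the weighted gradient by the weighted Laplacian, and the equation bounds the weighted Laplacian by the weighted gradient again. So it would need an extra a priori finiteness or iteration argument.
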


\begin{proof}
By \cite[Theorems~1--2]{boutiah et al}, we may choose
$\lambda_*>\max\{\omega_1,0\}$ such that, for every
$\lambda\geq\lambda_*$ and $f\in\Cc$,
$
 u:=R(\lambda,\Lambda_p)f\in C_0(\mathbb R^d)
$
is independent of $p$ and is nonnegative whenever $f\geq0$. It remains to prove that $u\in\mathcal D(\Lambda)$.

By consistency, we may take \(p>d\). Then \(u\in W_{\mathrm{loc}}^{2,p}(\mathbb{R}^d)\). Fix
$$
0<\sigma<\min\{1,\beta,1-d/p\}.
$$
The local Sobolev embedding gives \(u\in C_{\mathrm{loc}}^{1,\sigma}(\mathbb{R}^d)\). Since the coefficients are locally \(C^{0,\sigma}\), the equation
\[
\Delta u=
\frac{(\lambda+c|x|^\beta)u-b|x|^{\alpha-2}x\cdot\nabla u-f}
{1+|x|^\alpha}
\]
has a locally \(C^{0,\sigma}\) right-hand side. Interior Schauder regularity therefore yields \(u\in C_{\mathrm{loc}}^{2,\sigma}(\mathbb{R}^d)\subset C^2(\mathbb{R}^d)\); see \cite[Theorem~9.19]{GT}. By linearity, it suffices to consider real-valued \(f\), in which case \(u\) is real-valued because the coefficients are real and the resolvent solution is unique. Thus the maximum principle below applies to classical real-valued solutions.

Choose an integer $k\geq1$ such that
$
 2k>d+\beta+\alpha-1.
$
Direct differentiation gives
\begin{equation}\label{eq:range-barrier}
 \begin{aligned}
 &(\lambda-\Lambda)(1+|x|^2)^{-k}\\
 &\quad\geq
 \bigl(\lambda+c|x|^\beta-C_k(1+|x|^{\alpha-2})\bigr)
 (1+|x|^2)^{-k}.
 \end{aligned}
\end{equation}
Since $\beta>\alpha-2$, we can choose $R\geq1$ such that
$\operatorname{supp}f\subset B_R$ and
\[
 (\lambda-\Lambda)(1+|x|^2)^{-k}\geq0,
 \qquad |x|>R.
\]
Choose $M>0$ so that
\[
 |u(x)|\leq M(1+|x|^2)^{-k},
 \qquad |x|=R.
\]
Because $(\lambda-\Lambda)u=0$ for $|x|>R$, we have
\[
 (\lambda-\Lambda)
 \bigl(\pm u-M(1+|x|^2)^{-k}\bigr)\leq0,
 \qquad |x|>R.
\]
Both functions are nonpositive on $|x|=R$ and tend to zero at infinity. If either were positive somewhere, it would
attain a positive interior maximum. At that point, its gradient would vanish and its Laplacian would be nonpositive,
so its image under $\lambda-\Lambda$ would be strictly positive, since $\lambda+c|x|^\beta>0$. This is a contradiction.

Since $u$ is continuous on $\overline{B_R}$, increasing the constant gives
\begin{equation}\label{eq:resolvent-polynomial-decay}
|u(x)|\leq C_k(1+|x|^2)^{-k},
\qquad x\in\mathbb R^d.
\end{equation}

Outside $\operatorname{supp}f$, the resolvent equation becomes
\begin{equation}\label{eq:range-normalized}
\Delta u
+\frac{b|x|^{\alpha-2}x}{1+|x|^\alpha}\cdot\nabla u
=
\frac{\lambda+c|x|^\beta}{1+|x|^\alpha}\,u.
\end{equation}
The first-order coefficient is bounded, since
\[
\left|
\frac{b|x|^{\alpha-2}x}{1+|x|^\alpha}
\right|
=
|b|\frac{|x|^{\alpha-1}}{1+|x|^\alpha}
\leq |b|.
\]
If $|x|>R+2$, then $B(x,2)$ is disjoint from $\operatorname{supp}f$. Applying the interior $W^{2,p}$
estimate to \eqref{eq:range-normalized}, with its right-hand side treated as a forcing term, and using Sobolev embedding with $p>d$, we obtain
\begin{equation}\label{eq:resolvent-gradient}
\begin{aligned}
|\nabla u(x)|
&\leq C\|u\|_{W^{2,p}(B(x,1))}\\
&\leq C\left(
\|u\|_{L^p(B(x,2))}
+
\left\|
\frac{\lambda+c|\cdot|^\beta}{1+|\cdot|^\alpha}\,u
\right\|_{L^p(B(x,2))}
\right)\\
&\leq C_\lambda(1+|x|^\beta)
\sup_{y\in B(x,2)}|u(y)|.
\end{aligned}
\end{equation}
The constants are independent of $x$: the balls have fixed radii, the principal part is the Laplacian, and the
first-order coefficient is uniformly bounded; see \cite[Theorem~9.11]{GT}. For the last inequality, we used the fixed volume of $B(x,2)$ and the bound
\[
\frac{\lambda+c|y|^\beta}{1+|y|^\alpha}
\leq \lambda+c|y|^\beta
\leq C_\lambda(1+|x|^\beta),
\qquad y\in B(x,2).
\]

For $|x|\geq4$ and $y\in B(x,2)$, we have $|y|\geq|x|/2$. Hence \eqref{eq:resolvent-polynomial-decay} gives
\[
\sup_{y\in B(x,2)}|u(y)|
\leq C_k\left(1+\frac{|x|^2}{4}\right)^{-k}
\leq C(1+|x|)^{-2k}.
\]
Combining this with \eqref{eq:resolvent-gradient}, we obtain
\begin{equation}\label{eq:resolvent-gradient-decay}
|\nabla u(x)|
\leq C(1+|x|)^{\beta-2k}
\end{equation}
for all sufficiently large $|x|$.

We now check the required integrability. Since $2k>d+\beta$, estimate \eqref{eq:resolvent-polynomial-decay} implies
$
u\in L^1(\mathbb R^d),
$
and 
$
|x|^\beta u\in L^1(\mathbb R^d).
$
Moreover, \eqref{eq:resolvent-gradient-decay} gives, for all sufficiently large $R_0$,
\[
\int_{|x|>R_0}
(1+|x|^{\alpha-1})|\nabla u(x)|\,dx
\leq
C\int_{R_0}^\infty
r^{d-1+\beta+\alpha-1-2k}\,dr
<\infty,
\]
since $2k>d+\beta+\alpha-1$. Together with the local regularity of $u$, this yields
$
(1+|x|^{\alpha-1})\nabla u\in L^1(\mathbb R^d)
$
and hence 
$
u\in W^{1,1}(\mathbb R^d).
$

Finally, the resolvent equation gives
\[
(1+|x|^\alpha)\Delta u
=
(\lambda+c|x|^\beta)u
-b|x|^{\alpha-2}x\cdot\nabla u-f.
\]
Every term on the right belongs to $L^1$. In particular, the drift term is integrable because
\[
\bigl|b|x|^{\alpha-2}x\cdot\nabla u\bigr|
\leq |b|(1+|x|^{\alpha-1})|\nabla u|.
\]
Thus $(1+|x|^\alpha)\Delta u\in L^1(\mathbb R^d)$, and all the conditions defining $\mathcal D(\Lambda)$
are satisfied. Therefore $u\in\mathcal D(\Lambda)$.

\end{proof}

\section{Proofs of the main results}

\subsection{Proof of Theorem~\ref{thm:main}}
We divide the proof into three parts. We first prove generation by the Lumer--Phillips theorem and establish positivity through the resolvent. The domain estimates then follow from Proposition~\ref{closure}. Finally, we compare the resolvents to prove consistency with the $L^p$-semigroups.

\noindent
\emph{(i) Generation and positivity.}
By Proposition~\ref{closure}, the operator
$(\Lambda,\mathcal D(\Lambda))$ is closed and densely defined, with core $\Cc$. Passing to the limit in the norm inequality of Lemma~\ref{kato} gives
\begin{equation}\label{closed-diss}
 (\lambda-\omega_1)\|u\|_1
 \leq\|(\lambda-\Lambda)u\|_1,
 \qquad u\in\mathcal D(\Lambda),\quad\lambda>\omega_1.
\end{equation}
Thus $\Lambda-\omega_1$ is dissipative.

Fix $\lambda\geq\lambda_*$ and $1<p<\infty$. Given $f\in L^1$, choose $f_n\in\Cc$ such that
$f_n\to f$ in $L^1$. By Lemma~\ref{range},
\[
 u_n:=R(\lambda,\Lambda_p)f_n\in\mathcal D(\Lambda),
 \qquad
 (\lambda-\Lambda)u_n=f_n.
\]
Applying \eqref{closed-diss} to $u_n-u_m$, we obtain
\[
 \|u_n-u_m\|_1
 \leq\frac{\|f_n-f_m\|_1}{\lambda-\omega_1}.
\]
Hence $(u_n)$ converges in $L^1$ to some $u$, and
\[
 \Lambda u_n=\lambda u_n-f_n
 \longrightarrow\lambda u-f
 \quad\text{in }L^1.
\]
Since $\Lambda$ is closed, $ u\in\mathcal D(\Lambda)$ and $(\lambda-\Lambda)u=f$. Therefore
$$
 \operatorname{Ran}(\lambda-\Lambda)=L^1,
 \qquad\lambda\geq\lambda_*.
$$
The Lumer--Phillips theorem \cite[Theorem~II.3.15]{EN} shows that $\Lambda-\omega_1$ generates a contraction semigroup. Consequently, $\Lambda$ generates a strongly continuous semigroup $(T_1(t))_{t\geq0}$ satisfying
\eqref{semigroup-bound}, and its resolvent satisfies \eqref{resolvent-bound}.

If $f\geq0$, we may choose $f_n\geq0$. Lemma~\ref{range} then gives $u_n\geq0$, and passage to the limit in $L^1$ yields
\begin{equation}\label{large-resolvent-positive}
 R(\lambda,\Lambda)f\geq0,
 \qquad f\geq0,\quad\lambda\geq\lambda_*.
\end{equation}
For $t>0$ and $f\in L^1$, the exponential formula
\cite[Corollary~III.5.5]{EN} gives
\begin{equation}\label{L1-exponential-formula}
 T_1(t)f
 =\lim_{n\to\infty}
 \left[\frac nt
 R\left(\frac nt,\Lambda\right)\right]^n f
 \quad\text{in }L^1.
\end{equation}
For sufficiently large $n$, we have $n/t\geq\lambda_*$. Thus \eqref{large-resolvent-positive} and \eqref{L1-exponential-formula} imply $T_1(t)f\geq0$ whenever $f\geq0$.

Finally, for every real $\lambda>\omega_1$, the Laplace representation gives
\[
 R(\lambda,\Lambda)f
 =\int_0^\infty e^{-\lambda t}T_1(t)f\,dt
 \geq0,
 \qquad f\geq0,
\]
where the integral converges in $L^1$ by \eqref{semigroup-bound}.

\medskip
\noindent
\emph{(ii) Equivalence of norms.}
By \eqref{graph-estimate},
\[
 \|u\|_{\mathcal D(\Lambda)}
 \leq C\bigl(\|u\|_1+\|\Lambda u\|_1\bigr),
 \qquad u\in\mathcal D(\Lambda).
\]
Conversely, the expression for $\Lambda$ gives
\begin{align*}
 \|u\|_1+\|\Lambda u\|_1
 &\leq \|u\|_1
       +\|(1+|x|^\alpha)\Delta u\|_1\\
 &\quad+
       |b|\|(1+|x|^{\alpha-1})\nabla u\|_1
       +c\||x|^\beta u\|_1\\
 &\leq \max\{1,|b|,c\}\,
       \|u\|_{\mathcal D(\Lambda)}.
\end{align*}
Hence the domain norm and the graph norm are equivalent.

\medskip
\noindent
\emph{(iii) Consistency.}
Fix $1<p<\infty$. We first prove that
\begin{equation}\label{resolvent-consistency}
 R(\lambda,\Lambda)f=R(\lambda,\Lambda_p)f,
 \qquad
 f\in L^1\cap L^p,\quad\lambda\geq\lambda_*.
\end{equation}
For $f\in\Cc$, this follows from Lemma~\ref{range} and the injectivity of $\lambda-\Lambda$, which is a consequence of \eqref{closed-diss}.

For $f\in L^1\cap L^p$, choose $f_n\in\Cc$ such that
\[
 \|f_n-f\|_1+\|f_n-f\|_p\longrightarrow0.
\]
The boundedness of the resolvents gives
\[
 \begin{aligned}
 R(\lambda,\Lambda)f_n
 &\longrightarrow R(\lambda,\Lambda)f
 &&\text{in }L^1,\\
 R(\lambda,\Lambda_p)f_n
 &\longrightarrow R(\lambda,\Lambda_p)f
 &&\text{in }L^p.
 \end{aligned}
\]
The two sequences agree term by term, and both convergences hold in $L^1_{\mathrm{loc}}$.
Their limits therefore coincide, proving \eqref{resolvent-consistency}.

In particular, the common resolvent maps $L^1\cap L^p$ into itself. For $t>0$ and all sufficiently large $n$, so that $n/t\geq\lambda_*$,
\[
 \left[\frac nt
 R\left(\frac nt,\Lambda\right)\right]^n f
 =
 \left[\frac nt
 R\left(\frac nt,\Lambda_p\right)\right]^n f,
 \qquad f\in L^1\cap L^p.
\]
By \eqref{L1-exponential-formula} and its $L^p$ analogue, the two sides converge to $T_1(t)f$ in $L^1$ and to
$T_p(t)f$ in $L^p$, respectively. Identifying the limits in $L^1_{\mathrm{loc}}$ yields
\[
 T_1(t)f=T_p(t)f,
 \qquad f\in L^1\cap L^p,\quad t>0.
\]
The equality also holds at $t=0$, since both semigroups have initial value $I$.
\hfill\qed

\subsection{Proof of Corollary~\ref{cor:optimal-bound-compact-resolvent}}
We divide the proof into two parts. We first compute $\omega_1$, prove its optimality and deduce the contractivity criterion. We then combine local compactness with a uniform estimate outside large balls to prove that $\mathcal D(\Lambda)$ embeds compactly into $L^1(\mathbb R^d)$, and hence that
the resolvent is compact.

\noindent
\emph{(i) The optimal exponential bound.}
By definition,
\[
 \omega_1
 =\sup_{r\geq0}
 \bigl(\gamma r^{\alpha-2}-cr^\beta\bigr).
\]
If $\gamma\leq0$, the supremum is zero.
If $\gamma>0$, differentiation shows that the maximum
is attained at
\[
 r_*=
 \left(\frac{(\alpha-2)\gamma}{c\beta}\right)^{
 \frac{1}{\beta-\alpha+2}}.
\]
Since
$
 cr_*^\beta
 =\frac{\alpha-2}{\beta}\,\gamma r_*^{\alpha-2},
$
we obtain
\[
 \omega_1
 =\frac{\beta-\alpha+2}{\beta}\,
   \gamma r_*^{\alpha-2},
\]
which proves \eqref{omega1-explicit}.

Suppose now that \eqref{optimal-exponential-bound} holds for some $\omega\in\mathbb R$. Let $f\in\Cc$ satisfy $f\geq0$ and $\|f\|_1=1$. Since $f\in\mathcal D(\Lambda)$,
\[
 \frac{T_1(t)f-f}{t}
 \longrightarrow\Lambda f
 \quad\text{in }L^1
 \qquad(t\downarrow0).
\]
Positivity gives $\|T_1(t)f\|_1=\int_{\mathbb R^d}T_1(t)f\,dx$. Thus, integration by parts and
\eqref{optimal-exponential-bound} yield
\begin{align}
 \int_{\mathbb R^d}
 \bigl(\gamma|x|^{\alpha-2}-c|x|^\beta\bigr)f(x)\,dx
 &=\int_{\mathbb R^d}\Lambda f(x)\,dx \notag\\
 &=\lim_{t\downarrow0}
   \frac{\|T_1(t)f\|_1-1}{t} \notag\\
 &\leq\lim_{t\downarrow0}\frac{e^{\omega t}-1}{t}
 =\omega.
 \label{optimality-test}
\end{align}

For any $x\in\mathbb R^d$, choose such test functions supported in arbitrarily small balls about $x$. By continuity, \eqref{optimality-test} implies
\[
 \gamma|x|^{\alpha-2}-c|x|^\beta\leq\omega,
 \qquad x\in\mathbb R^d.
\]
Taking the supremum gives $\omega_1\leq\omega$. Since \eqref{semigroup-bound} holds, $\omega_1$ is optimal.

By \eqref{omega1-explicit}, we have $\omega_1\geq0$, with equality precisely when $b\geq\alpha$.
The contractivity criterion follows from this observation, optimality, and \eqref{semigroup-bound}.

\medskip
\noindent
\emph{(ii) Compactness.}
Let $(u_n)\subset\mathcal D(\Lambda)$ satisfy
\[
\sup_n\bigl(\|u_n\|_1+\|\Lambda u_n\|_1\bigr)<\infty.
\]
By \eqref{graph-estimate}, there exists $C>0$ such that
\[
\sup_n\left(
\|u_n\|_{W^{1,1}(\mathbb R^d)}
+\||x|^\beta u_n\|_1
\right)\leq C.
\]

For every integer $j\geq1$, the restrictions of $u_n$ to $B_j$ form a bounded sequence in $W^{1,1}(B_j)$.
The Rellich--Kondrachov theorem gives the compact embedding $W^{1,1}(B_j)\hookrightarrow L^1(B_j)$;
see \cite[Theorem~7.26]{GT}. We therefore extract a subsequence $(u_n^{(1)})$ converging in $L^1(B_1)$, then a subsequence $(u_n^{(2)})$ of $(u_n^{(1)})$ converging in $L^1(B_2)$, and continue in this way.
The diagonal subsequence
$$
v_n:=u_n^{(n)}
$$
converges in $L^1(B_j)$ for every fixed $j$, since its tail is a subsequence of $(u_n^{(j)})$. It consequently converges in $L^1(B_R)$ for every $R>0$, by choosing an integer $j>R$. We denote this subsequence again by $(u_n)$.

To pass from local to global convergence, we use the weighted bound. Since $|x|^\beta\geq R^\beta$ for
$|x|>R$, we have
\begin{equation}\label{uniform-tail}
\sup_n\int_{|x|>R}|u_n(x)|\,dx
\leq
R^{-\beta}\sup_n\||x|^\beta u_n\|_1
\leq CR^{-\beta},
\qquad R\geq1.
\end{equation}
Splitting the integral over $B_R$ and its complement gives
\[
\begin{aligned}
\|u_n-u_m\|_1
&\leq
\|u_n-u_m\|_{L^1(B_R)}
+\int_{|x|>R}|u_n(x)|\,dx
+\int_{|x|>R}|u_m(x)|\,dx\\
&\leq
\|u_n-u_m\|_{L^1(B_R)}+2CR^{-\beta}.
\end{aligned}
\]
Given $\varepsilon>0$, first choose $R\geq1$ so large that $2CR^{-\beta}<\varepsilon/2$, which is possible
because $\beta>0$. For this fixed $R$, convergence in $L^1(B_R)$ gives
an integer $N$ such that
\[
\|u_n-u_m\|_{L^1(B_R)}<\varepsilon/2
\qquad\text{for all }n,m\geq N.
\]
Hence $\|u_n-u_m\|_1<\varepsilon$ for $n,m\geq N$. Thus the extracted subsequence is Cauchy in $L^1(\mathbb R^d)$ and therefore converges in this space. This proves that the embedding of $\mathcal D(\Lambda)$,
equipped with the graph norm, into $L^1(\mathbb R^d)$ is compact.

Finally, fix $\lambda\in\rho(\Lambda)$.
For $f\in L^1(\mathbb R^d)$, the identity $ \Lambda R(\lambda,\Lambda)f =\lambda R(\lambda,\Lambda)f-f$ gives
\begin{align*}
\|R(\lambda,\Lambda)f\|_1
+\|\Lambda R(\lambda,\Lambda)f\|_1
&\leq
(1+|\lambda|)\|R(\lambda,\Lambda)f\|_1+\|f\|_1\\
&\leq C_\lambda\|f\|_1.
\end{align*}
Thus the resolvent maps bounded subsets of $L^1$ to bounded subsets of $\mathcal D(\Lambda)$ in the
graph norm. By the compactness of the embedding of $\mathcal D(\Lambda)$ into $L^1(\mathbb R^d)$, every bounded sequence $(f_n)$ in $L^1$ has a subsequence for which $R(\lambda,\Lambda)f_n$
converges in $L^1$. Therefore $R(\lambda,\Lambda)$ is compact on $L^1(\mathbb R^d)$.
\hfill\qed

\subsection{Proof of Theorem~\ref{thm:L1-analyticity}}

We divide the proof into five steps. In the first step, we establish a resolvent estimate for constant coefficients.
In the second and third steps, we use localization and weighted interpolation to obtain an estimate for $\Lambda$.
In the fourth step, we prove existence of the resolvent on a suitable sector and deduce analyticity. Finally, we prove compactness for positive times.

Fix 
$$\theta\in(\pi/2,\pi).$$

\medskip
\noindent
\emph{Step 1.}
Let $a_0,w_0>0$. Let $A$ denote the generator of the heat semigroup on
$L^1(\mathbb R^d)$. In particular,
\[
 C_c^\infty(\mathbb R^d)\subset D(A),
 \qquad Av=\Delta v
 \quad\text{for }v\in C_c^\infty(\mathbb R^d).
\]
The heat semigroup is bounded analytic of angle $\pi/2$; see \cite[Example~II.4.10]{EN}. The corresponding sectorial resolvent estimate therefore gives
\[
 \Sigma_\theta\subset\rho(A),
 \qquad
 \|R(\mu,A)\|_{L^1\to L^1}
 \leq \frac{M_\theta}{|\mu|},
 \qquad \mu\in\Sigma_\theta,
\]
where
\[
 \Sigma_\theta
 :=\{\mu\in\mathbb C\setminus\{0\}:|\arg\mu|<\theta\};
\]
see \cite[Theorem~II.4.6]{EN}. The constant $M_\theta$ depends only on $d$ and $\theta$. Now let $v\in C_c^\infty(\mathbb R^d)$.
Since $R(\mu,A)=(\mu-A)^{-1}$ and $Av=\Delta v$, we have
\[
 v=R(\mu,A)(\mu-A)v
  =R(\mu,A)(\mu-\Delta)v.
\]
Taking the $L^1$-norm and using the resolvent bound yields
\[
 \|v\|_1
 \leq \|R(\mu,A)\|_{L^1\to L^1}
       \|(\mu-\Delta)v\|_1
 \leq \frac{M_\theta}{|\mu|}
       \|(\mu-\Delta)v\|_1.
\]
Multiplying by $|\mu|$, we obtain
\[
 |\mu|\|v\|_1
 \leq M_\theta\|(\mu-\Delta)v\|_1,
 \qquad \mu\in\Sigma_\theta.
\]
Let $\lambda\in\Sigma_\theta$. Since $w_0>0$, we also have $(\lambda+w_0)/a_0\in\Sigma_\theta$.
Applying the preceding estimate with $\mu=(\lambda+w_0)/a_0$ and multiplying by $a_0$, we obtain
\[
 |\lambda+w_0|\|v\|_1
 \leq M_\theta
 \|(\lambda-a_0\Delta+w_0)v\|_1.
\]
Writing $\lambda=|\lambda|e^{i\varphi}$ with
$|\varphi|<\theta$, we obtain
\[
\begin{aligned}
|\lambda+w_0|
&=\left|e^{-i\varphi/2}(\lambda+w_0)\right|\\
&\geq \operatorname{Re}
       \left(e^{-i\varphi/2}(\lambda+w_0)\right)\\
&=(|\lambda|+w_0)\cos(\varphi/2)\\
&\geq (|\lambda|+w_0)\cos(\theta/2).
\end{aligned}
\]
Here we used $|e^{-i\varphi/2}|=1$ and $\cos(\varphi/2)\geq\cos(\theta/2)>0$. Consequently,
\begin{equation}\label{eq:an-constant-L1}
 (|\lambda|+w_0)\|v\|_1
 \leq C_\theta
 \|(\lambda-a_0\Delta+w_0)v\|_1,
\end{equation}
where $C_\theta=M_\theta/\cos(\theta/2)$.

To estimate $\Delta v$, use the identity
\[
 a_0\Delta v
 =(\lambda+w_0)v-(\lambda-a_0\Delta+w_0)v.
\]
Hence,
\[
 \begin{aligned}
 a_0\|\Delta v\|_1
 &\leq (|\lambda|+w_0)\|v\|_1
       +\|(\lambda-a_0\Delta+w_0)v\|_1\\
 &\leq (C_\theta+1)
       \|(\lambda-a_0\Delta+w_0)v\|_1.
 \end{aligned}
\]
Adding this inequality to \eqref{eq:an-constant-L1} yields
\begin{equation}\label{eq:an-frozen}
 (|\lambda|+w_0)\|v\|_1+a_0\|\Delta v\|_1
 \leq C_0\|(\lambda-a_0\Delta+w_0)v\|_1,
 \qquad v\in\Cc.
\end{equation}
The constant $C_0=2C_\theta+1$ depends only on $d$ and $\theta$, and is independent of $a_0$, $w_0$ and $\lambda$.

\medskip
\noindent
\emph{Step 2.}
We apply \eqref{eq:an-frozen} locally to $\lambda+1-\Lambda$, whose potential $1+c|x|^\beta$ is strictly positive. Fix $\delta\in(0,1/2)$. Using the covering result in \cite[Proposition~3]{boutiah et al} or in \cite[Proposition 6.1]{CF} and normalized smooth cutoffs, choose a locally finite partition of unity $(\chi_j)_{j\geq1}$ with compact supports
and uniformly bounded overlap, such that
\[
 \chi_j\geq0,\qquad \sum_j\chi_j=1.
\]
For suitable points $x_j\in\operatorname{supp}\chi_j$,
set
\[
 a_j:=1+|x_j|^\alpha,
 \qquad W_j:=1+c|x_j|^\beta.
\]
The partition may be chosen so that
\begin{equation}\label{eq:an-oscillation}
 \frac{|1+|x|^\alpha-a_j|}{1+|x|^\alpha}
 +\frac{|1+c|x|^\beta-W_j|}{1+c|x|^\beta}
 \leq\delta
 \quad\text{on }\operatorname{supp}\chi_j,
\end{equation}
and
\begin{equation}\label{eq:an-cutoffs}
 \sum_j|\nabla\chi_j(x)|
 \leq\frac{C_\delta}{1+|x|},
 \qquad
 \sum_j|\Delta\chi_j(x)|
 \leq\frac{C_\delta}{(1+|x|)^2}.
\end{equation}

Indeed, take covering balls whose radii are a sufficiently small fixed multiple of $1+|x_j|$. The logarithms of both coefficients are uniformly continuous as functions of $\log(1+|x|^2)$. Their oscillations on the doubled balls can therefore
be made smaller than $\log(1+\delta/2)$, which gives \eqref{eq:an-oscillation}. The scaled cutoffs, after normalization, satisfy \eqref{eq:an-cutoffs} by the bounded overlap and the comparability of $1+|x|$ and $1+|x_j|$ on these balls.

Let $u\in\Cc$ and $\lambda\in\Sigma_\theta$. Apply \eqref{eq:an-frozen} to $\chi_j u$ with $a_0=a_j$ and $w_0=W_j$. The product rule gives
\begin{equation}\label{eq:an-localized-identity}
\begin{aligned}
(\lambda-a_j\Delta+W_j)(\chi_j u)
={}&\chi_j(\lambda+1-\Lambda)u\\
&+b\chi_j|x|^{\alpha-2}x\cdot\nabla u\\
&+(1+|x|^\alpha-a_j)\chi_j\Delta u\\
&+(W_j-1-c|x|^\beta)\chi_j u\\
&-a_j\bigl(
 2\nabla\chi_j\cdot\nabla u+u\Delta\chi_j
 \bigr).
\end{aligned}
\end{equation}

Since the partition is nonnegative,
\[
 \sum_j\|\chi_j u\|_1=\|u\|_1,
 \qquad
 \sum_j\|\chi_j(\lambda+1-\Lambda)u\|_1
 =\|(\lambda+1-\Lambda)u\|_1.
\]
Moreover, $\Delta u=\sum_j\Delta(\chi_j u)$ and
\eqref{eq:an-oscillation} imply
\[
\begin{aligned}
 \|(1+|x|^\alpha)\Delta u\|_1
 &\leq\frac{1}{1-\delta}
       \sum_j a_j\|\Delta(\chi_j u)\|_1,\\
 \|(1+c|x|^\beta)u\|_1
 &\leq\frac{1}{1-\delta}
       \sum_j W_j\|\chi_j u\|_1.
\end{aligned}
\]

The coefficient errors in \eqref{eq:an-localized-identity} are bounded by $\delta$ times the corresponding weighted norms.
For the drift and cutoff terms, use \eqref{eq:an-cutoffs}, the comparison $a_j\leq(1+\delta)(1+|x|^\alpha)$ on each support,
and
\[
 \frac{1+|x|^\alpha}{1+|x|}
 \leq C(1+|x|^{\alpha-1}),
 \qquad
 \frac{1+|x|^\alpha}{(1+|x|)^2}
 \leq C(1+|x|^{\alpha-2}).
\]
Summing \eqref{eq:an-frozen} over $j$ therefore yields
\begin{equation}\label{eq:an-localization}
\begin{aligned}
&|\lambda|\|u\|_1
 +\|(1+|x|^\alpha)\Delta u\|_1
 +\|(1+c|x|^\beta)u\|_1\\
&\quad\leq C_1\|(\lambda+1-\Lambda)u\|_1\\
&\qquad+C_1\delta\bigl(
 \|(1+|x|^\alpha)\Delta u\|_1
 +\|(1+c|x|^\beta)u\|_1
 \bigr)\\
&\qquad+C_\delta\bigl(
 \|(1+|x|^{\alpha-1})\nabla u\|_1
 +\|(1+|x|^{\alpha-2})u\|_1
 \bigr).
\end{aligned}
\end{equation}
Since $(1-\delta)^{-1}\leq2$, we may take
$C_1=2C_0$, independently of $\delta$, $\lambda$
and $u$. The constant $C_\delta$ may depend on
$\delta$, but not on $\lambda$ or $u$.

\medskip
\noindent
\emph{Step 3.}
The weighted interpolation estimate established in the proof of Proposition~\ref{closure} states that, for every $\varepsilon>0$,
\begin{equation}\label{eq:an-interpolation}
\|(1+|x|^{\alpha-1})\nabla u\|_1
\leq
\varepsilon\|(1+|x|^\alpha)\Delta u\|_1
+C_\varepsilon\|(1+|x|^{\alpha-2})u\|_1.
\end{equation}
Since $\beta>\alpha-2$ and $c>0$, for every $\eta>0$
there exists $C_\eta>0$ such that
\[
1+|x|^{\alpha-2}
\leq\eta(1+c|x|^\beta)+C_\eta.
\]
Hence
\begin{equation}\label{eq:an-potential}
\|(1+|x|^{\alpha-2})u\|_1
\leq\eta\|(1+c|x|^\beta)u\|_1
+C_\eta\|u\|_1.
\end{equation}
Applying \eqref{eq:an-potential} with
$\eta=\varepsilon/(1+C_\varepsilon)$ and using
\eqref{eq:an-interpolation} gives
\begin{equation}\label{eq:an-lower-order}
\begin{aligned}
&\|(1+|x|^{\alpha-1})\nabla u\|_1
+\|(1+|x|^{\alpha-2})u\|_1\\
&\quad\leq
\varepsilon\bigl(
 \|(1+|x|^\alpha)\Delta u\|_1
+\|(1+c|x|^\beta)u\|_1
\bigr)
+C'_\varepsilon\|u\|_1.
\end{aligned}
\end{equation}

Choose first $\delta$ so that $C_1\delta\leq1/4$, and then $\varepsilon$ so that $C_\delta\varepsilon\leq1/4$.
Substituting \eqref{eq:an-lower-order} into \eqref{eq:an-localization} and absorbing the weighted terms yields
\[
\begin{aligned}
&|\lambda|\|u\|_1
+\frac12\bigl(
 \|(1+|x|^\alpha)\Delta u\|_1
+\|(1+c|x|^\beta)u\|_1
\bigr)\\
&\quad\leq
C_1\|(\lambda+1-\Lambda)u\|_1
+C_\delta C'_\varepsilon\|u\|_1.
\end{aligned}
\]
Taking
\[
R>\max\{2C_\delta C'_\varepsilon,\omega_1-1,0\},
\]
we conclude that
\begin{equation}\label{eq:an-coercivity}
|\lambda|\|u\|_1
\leq C\|(\lambda+1-\Lambda)u\|_1,
\qquad
\lambda\in\Sigma_\theta,\quad |\lambda|\geq R,
\end{equation}
with $C=2C_1$.

By Proposition~\ref{closure}, $\Cc$ is a core for $\Lambda$. Approximating in the graph norm therefore extends
\eqref{eq:an-coercivity} to every $u\in\mathcal D(\Lambda)$.

\medskip
\noindent
\emph{Step 4.}
Fix $r\geq R$. Since $1+r>\omega_1$,
Theorem~\ref{thm:main} gives $1+r\in\rho(\Lambda)$.
Hence the set
\[
 \bigl\{\varphi\in(-\theta,\theta):
 1+re^{i\varphi}\in\rho(\Lambda)\bigr\}
\]
contains zero. It is open because $\rho(\Lambda)$
is open.

We show that it is also relatively closed.
Let $\varphi_n\to\varphi\in(-\theta,\theta)$, with
$1+re^{i\varphi_n}\in\rho(\Lambda)$.
The estimate \eqref{eq:an-coercivity} gives
\[
 \|R(1+re^{i\varphi_n},\Lambda)\|_{\mathcal L(L^1)}
 \leq\frac{C}{r}.
\]
Therefore,
\[
\begin{aligned}
&\bigl\|
 r(e^{i\varphi}-e^{i\varphi_n})
 R(1+re^{i\varphi_n},\Lambda)
 \bigr\|_{\mathcal L(L^1)}\\
&\qquad\leq
 C|e^{i\varphi}-e^{i\varphi_n}|
 \longrightarrow0.
\end{aligned}
\]
For large $n$, this norm is less than one. The Neumann series then shows that $1+re^{i\varphi}-\Lambda$ is invertible.
Thus the set is relatively closed and, by connectedness, equals $(-\theta,\theta)$.

Since $r\geq R$ was arbitrary, we obtain $1+\lambda\in\rho(\Lambda)$, and 
\begin{equation}\label{eq:an-truncated-sector}
\|R(1+\lambda,\Lambda)\|_{\mathcal L(L^1)}
  \leq\frac{C}{|\lambda|},
  \qquad
  \lambda\in\Sigma_\theta,\quad |\lambda|\geq R.
\end{equation}
The resolvent bound follows again from \eqref{eq:an-coercivity}. We now choose
\[
 \omega>
 \max\left\{\omega_1,\,
 1+\frac{R}{\cos(\theta/2)}\right\}.
\]
Let $\lambda\in\omega+\Sigma_\theta$.
Since adding a positive real number preserves
$\Sigma_\theta$, we have
$
 \lambda-1=(\lambda-\omega)+(\omega-1) \in\Sigma_\theta.
$
Moreover, the preceding inequality gives
\[
\begin{aligned}
|\lambda-1|
&\geq \cos(\theta/2)
       \bigl(|\lambda-\omega|+\omega-1\bigr)\\
&\geq \cos(\theta/2)(\omega-1)>R,
\end{aligned}
\]
where the last inequality follows from the choice of $\omega$. Applying \eqref{eq:an-truncated-sector} with
$\lambda-1$ in place of $\lambda$, we conclude that
\[
 \omega+\Sigma_\theta\subset\rho(\Lambda),
 \qquad
 \|R(\lambda,\Lambda)\|_{\mathcal L(L^1)}
 \leq\frac{M}{|\lambda-\omega|},
\]
where $M=C/\cos(\theta/2)$. The preceding estimates show that $\Lambda-\omega I$ is sectorial in the sense of \cite[Definition~II.4.1]{EN}, with angle $\theta-\pi/2\in(0,\pi/2)$. By \cite[Theorem~II.4.6]{EN}, the semigroup $(e^{-\omega t}T_1(t))_{t\geq0}$ is therefore analytic. Multiplying its analytic extension by $e^{\omega z}$ gives an analytic extension of $(T_1(t))_{t\geq0}$.

\medskip
\noindent
\emph{Step 5.}
Since $\omega>\omega_1$, we have $\omega\in\rho(\Lambda)$. By Corollary~\ref{cor:optimal-bound-compact-resolvent}
\textup{(ii)}, $R(\omega,\Lambda)$ is compact on $L^1$. For every $t>0$, analyticity gives
\[
 T_1(t)L^1(\mathbb R^d)\subset\mathcal D(\Lambda),
 \qquad
 \Lambda T_1(t)\in\mathcal L(L^1).
\]
Consequently, $(\omega I-\Lambda)T_1(t)$ is bounded, and
\[
 T_1(t)
 =R(\omega,\Lambda)(\omega I-\Lambda)T_1(t).
\]
The right-hand side is the composition of a compact operator and a bounded operator. Hence $T_1(t)$ is compact for every $t>0$.
\hfill\qed

\makeatletter
\let\savedtocwrite\@tocwrite
\let\@tocwrite\@gobbletwo
\section*{AI usage disclosure}
\let\@tocwrite\savedtocwrite
\makeatother

The author used ChatGPT to assist in checking the calculations in the proof of Theorem ~\ref{thm:L1-analyticity}. We also used ChatGPT for editorial purposes, i.e.\,fixing typos, improving language, and carrying out the final editorial review of the paper. 

\bigskip

\noindent
\textbf{Data availability.} This article has no associated datasets.

\bigskip

\noindent
\textbf{Funding.} The author received no funding for this work.

\end{document}